\let\accentvec\vec
\let\vec\accentvec
\begin{document}
\mainmatter  % start of an individual contribution

% first the title is needed
\title{ Certified Roundoff Error Bounds using Bernstein Expansions and Sparse Krivine-Stengle Representations}

% a short form should be given in case it is too long for the running head
\titlerunning{Roundoff errors with Bernstein expansions \& Krivine-Stengle representations}

% the name(s) of the author(s) follow(s) next
%
% NB: Chinese authors should write their first names(s) in front of
% their surnames. This ensures that the names appear correctly in
% the running heads and the author index.
%
\author{Alexandre Rocca\inst{1,2} \and Victor Magron \inst{1} \and Thao Dang\inst{1} }
%\thanks{Please note that the LNCS Editorial assumes that all authors have used
%the western naming convention, with given names preceding surnames. This determines
%the structure of the names in the running heads and the author index.}%
%
\authorrunning{A.Rocca, V.Magron, T.Dang}
% (feature abused for this document to repeat the title also on left hand pages)

% the affiliations are given next; don't give your e-mail address
% unless you accept that it will be published
\institute{VERIMAG/CNRS\\
700 avenue Centrale, 38400 Saint Martin D'H{\`e}res, France
\and 
UJF-Grenoble 1/CNRS, TIMC-IMAG, \\
UMR 5525, Grenoble, F-38041, France}

\maketitle

\begin{abstract}
Floating point error is an inevitable drawback of embedded systems implementation. Computing rigorous upper bounds of roundoff errors is absolutely necessary for the validation of critical software. This problem of computing rigorous upper bounds is even more challenging when addressing non-linear programs. In this paper, we propose and compare two new methods based on Bernstein expansions and sparse Krivine-Stengle representations, adapted from the field of the global optimization, to compute upper bounds of roundoff errors for programs implementing polynomial functions. We release two related software package FPBern and FPKiSten, and compare them with state of the art tools.

 We show that these two methods achieve competitive performance, while computing accurate upper bounds by comparison with other tools.
\vspace{-0.3cm}
\keywords{Polynomial Optimization, Floating Point Arithmetic, Roundoff Error Bounds, Linear Programming Relaxations, Bernstein Expansions, Krivine-Stengle Representations}
\end{abstract}
%\vspace{-1cm}
\section{Introduction}
\label{introduction_intro}

Theoretical models, algorithms, and programs are often reasoned and designed in real algebra. However, their implementation on computers uses floating point algebra: this conversion from real numbers and their operations to floating point is not without errors. Indeed, due to finite memory and binary encoding in computers, real numbers cannot be exactly represented by floating point numbers. Moreover, numerous properties of the real algebra are not conserved such as commutativity or associativity.

The consequences of such imprecisions become particularly significant in safety-critical systems, especially in embedded systems which often include control components implemented as computer programs. When implementing an algorithm designed in real algebra, and initially tested on computers with single or double floating point precision, one would like to ensure that the roundoff error is not too large on more limited platforms (small processor, low memory capacity) by computing their accurate upper bounds.

% In this case, it becomes useful to compute an upper bound of the roundoff error caused by the floating point implementation.\\
For programs implementing linear functions, SAT/SMT solvers, as well as affine arithmetic, are efficient tools to obtain good upper bounds. When extending to programs with non-linear polynomial functions, the problem of determining a precise upper bound becomes substantially more difficult, since polynomial optimization problems are in general NP-hard~\cite{laurent2009sums}. We can cite at least three closely related and recent frameworks designed to provide upper bounds of roundoff errors for non-linear programs. \texttt{FPTaylor} \cite{fptaylor} is a tool based on Taylor-interval methods, while \texttt{Rosa} \cite{rosa} combines SMT with interval arithmetic.  \texttt{Real2Float} \cite{real2float} relies on Putinar representations of positive polynomials while exploiting sparsity in a similar way as the second method that we propose in this paper. %sums of squares relaxations and is, as we will see, the closest to the second method we propose in this paper. %, which is based on sparse Krivine-Stengle representations.

We introduce two methods, coming from the field of polynomial optimization, to compute upper bounds on roundoff errors of polynomial programs.  The first method is based on Bernstein expansions of polynomials, while the second relies on sparse Krivine-Stengle certificates for positive polynomials. 
In practice, these methods (presented in Section~\ref{contributions}) provide accurate bounds at a reasonable computational cost. 
Indeed, the size of the Bernstein expansion used in the first method as well as the size of the LP relaxation problems considered in the second method are both linear w.r.t.~the number of roundoff error variables. 
%In the sequel, we use a simple example to illustrate our two methods.
% and a first glance to the contribution, while in Section \ref{related_works} we will compare our methods to the current state of the art. Finally, in Section \ref{key_contributions} we will summarize the key contributions of our paper.
\subsection{Overview}
\label{overview}
Before explaining in detail each method, let us first illustrate the addressed problem on an example. Let $f$ be the degree two polynomial defined by:
\[ 
f(x) := x^2 - x~,\quad \forall x\in {X} = [0,1].
\]
When approximating the value of $f$ at a given real number $x$, one actually computes the floating point result $\hat{f} = \hat{x}\otimes\hat{x} \ominus \hat{x}$, with all the real operators $+$,$-$,$\times$ being substituted by their associated floating point operators $\oplus$, $\ominus$, $\otimes$, and $x$ being represented by the floating point number $\hat{x}$ (see Section \ref{preliminaries_floating_point} for more details on floating point arithmetics). A first simple rounding model consists of introducing an error term $e_i$ for each floating point operation, as well as for each floating point variable. For instance, $\hat{x}\otimes\hat{x}$ corresponds to $((1+e_1) \, x \,  (1+e_1) \, x) \, (1+e_2)$, where $e_1$ is the error term between $x$ and $\hat{x}$, and $e_2$ is the one associated to the operation $\otimes$.
Let $\eb$ be the vector of all error terms $e_i$. Given $e_i \in [ - \varepsilon ,\varepsilon ]$ for all $i$, with $ \varepsilon $ being the machine precision, we can write the floating point approximation $\hat{f}$ of $f$ as follows:
\[ 
\hat{f}(x,\eb) = (((1+e_1)x  (1+e_1) x)  (1+e_2) - x (1+e_1)) (1+e_3).
\]
Then, the absolute roundoff error is defined by:
\[
r(x,\eb) := \max\limits_{\begin{subarray}{1}x\in [0,1]\\ \eb\in[ - \varepsilon ,\varepsilon ]^3\end{subarray}}(|\hat{f}(x,\eb) - f(x)|)~~.
\]
% for all $(x,\eb)\in [0,1]\times[ - \varepsilon ,\varepsilon ]^3$.
However, we can make this computation easier with a slight approximation: $|\hat{f}(x,\eb)-f(x)| \leq |l(x,\eb)| + |h(x,\eb)|$ with $l(x,\eb)$ being the sum of the terms of $(\hat{f}(x,\eb)-f(x))$ which are linear in $\eb$, and $h(x,\eb)$ the sum of the terms which are non-linear in $\eb$. The term $|h(x,\eb)|$ can then be over-approximated by $O(|\eb|^2)$ which is \emph{in general} negligible compared to $|l(x,\eb)|$, and can be bounded using standard interval arithmetic. For this reason, we focus on computing an upper bound of $|l(x,\eb)|$. In the context of our example, $l(x,\eb)$ is given by:
\begin{align}
\label{eq:l}
l(x,\eb) = (2x^2-x) e_1 + x^2 e_2 + (x^2-x) e_3.
\end{align} 
%From now on, and until the end of the Section \ref{overview}, we take $\eb\in [-1,1]^3$, instead of $[-\varepsilon,\varepsilon]^3$: this way, the resulting roundoff error is proportional to $\varepsilon$.\\
We divide each error term $e_j$ by $\varepsilon$, and then consider the (scaled) linear part $l':=\frac{l}{\varepsilon}$ of the roundoff error with the error terms $\eb\in [-1,1]^3$.
For all $x\in [0,1]$, and $\eb\in [-1,1]^3$, one can easily compute a valid upper bound of $|l'(x,\eb)|$ with interval arithmetic. Surcharging the notation for elementary operations $+,-,\times$ in interval arithmetic, one has $l'(x,\eb) \in ([-0.125,1]\times[-1,1]+[0,1]\times[-1,1]+[-0.25,0]\times[-1,1])=[-2.25,2.25]$, yielding $|l(x,\eb)|\leq 2.25\varepsilon$.\\
Using the first method based on Bernstein expansion detailed in Section \ref{contributions_bernstein}, we compute $2\varepsilon$ as an upper bound of $|l(x,\eb)|$ in $0.23$s using \texttt{FPBern(b)} a rational arithmetic implementation. With the second method based on sparse Krivine-Stengle representation detailed in Section \ref{contributions_handelman}, we also compute an upper bound of $2\varepsilon$ in $0.03$s.\\
 Although on this particular example, the method based the sparse Krivine-Stengle representation appears to be more time-efficient,  in general the computational cost of the method based on Bernstein  expansions is lower. For this example, the bounds provided by both methods are tighter than the ones determined by interval arithmetic.
We emphasize the fact that the bounds provided by our two methods can be certified. Indeed, in the first case, the Bernstein coefficients (see Sections \ref{preliminaries_bernstein} and \ref{contributions_bernstein}) can be computed either with rational arithmetic or certified interval arithmetic to ensure guaranteed values of upper bounds. In the second case, the positivity certificates are directly provided by sparse Krivine-Stengle representations.

\subsection{Related Works}
\label{related_works}
We first mention two tools, based on positivity certificates, to compute roundoff error bounds. The first tool, related to~\cite{Constantinides}, relies on a similar approach to our second method. It uses dense Krivine-Stengle representations of positive polynomials to cast the initial problem as a finite dimensional LP problem. To reduce the size of this possibly large LP, \cite{Constantinides} provides heuristics to eliminate some variables and constraints involved in the dense representation. However, this approach has the main drawback of loosing the property of convergence toward optimal solutions of the initial problem.
Our second method uses sparse representations and is based on the previous works by~\cite{schweighofer06} and~\cite{sbsos}, allowing to ensure the convergence towards optimal solutions while greatly reducing the computational cost of LP problems. Another tool, \texttt{Real2Float} \cite{real2float}, exploits sparsity in the same way while using Putinar representations of positive polynomials, leading to solving semidefinite (SDP) problems.  Bounds provided by such SDP relaxations are in general more precise than LP relaxations~\cite{Lasserre2009Moments}, but the solving cost is higher.\newline
Several other tools are available to compute floating point roundoff errors. 
SMT solvers are efficient when handling linear programs, but often provide coarse bounds for non-linear programs, e.g.~when the analysis is done in isolation~\cite{rosa}. The \texttt{Rosa}~\cite{rosa} tool is a solver mixing SMT and interval arithmetic which compiles functional \texttt{SCALA} programs implementing non-linear functions (involving $/,\surd$, and polynomials) as well as conditional statements. SMT solvers are theoretically able to output certificates which can be validated externally afterwards. 
%The \texttt{FPTaylor} \cite{fptaylor} tool, relies on symbolic Taylor interval method, and branch and bound techniques, to provide the upper error bounds of the studied problems.
\texttt{FPTaylor} tool \cite{fptaylor}, relies on \emph{Symbolic Taylor expansion} method, which consists of a branch and bound algorithm based on interval arithmetic. %$s_j$, $j=1,\dots,m$ when $l(\xb,\eb)=\sum_{j=1}^m s_j(\xb)e_j$, and finally obtain an upper bound of $|l|$. 
Bernstein expansions have been extensively used to handle systems of polynomial equations \cite{mourrain2009,smithThesis} as well as systems of polynomial inequalities (including polynomial optimization), see for example \cite{smithThesis,dreossiHSCC,munoz13}. Yet, to the best of our knowledge, there is no tool based on Bernstein expansions in the context of roundoff error computation.
The \texttt{Gappa} tool provides certified bounds with elaborated interval arithmetic procedure relying on multiple-precision dyadic fractions. % to provide fast, and yet sometimes coarser error bounds. 
The static analysis tool \texttt{FLUCTUAT} \cite{fluctuat}  performs forward computation (by contrast with optimization) to analyze floating point \texttt{C} programs. Both \texttt{FLUCTUAT} and \texttt{Gappa} use a different rounding model (see Section \ref{preliminaries_floating_point}), also available in \texttt{FPTaylor}, that we do not handle in our current implementation.
Some tools also allow formal validation of certified bounds. \texttt{FPTaylor}, \texttt{Real2Float} \cite{real2float}, as well as \texttt{Gappa} \cite{gappa} provide formal proof certificates, with \texttt{HOL-Light} \cite{hollight} for the first case, and \texttt{Coq} \cite{CoqProofAssistant} for the two other ones. 
% We will not compare with it in the Section \ref{implementation}, since \texttt{Gappa} is well adapted to the improved rounding model, while we focus on the simple rounding model.\\

\subsection{Key Contributions}
\label{key_contributions}
Here is a summary of our key contributions:
\begin{itemize}
\item[$\blacktriangleright$] We present two new methods to compute upper bounds of floating point roundoff errors for programs implementing multivariate polynomial functions with input variables constrained to boxes. The first one is based on Bernstein expansions and the second one relies on sparse Krivine-Stengle representations.
We also propose a theoretical framework to guarantee the validity of upper bounds computed with both methods (see Section~\ref{contributions}). In addition, we give an alternative shorter proof in Section \ref{preliminaries_handelman} for the existence of Krivine-Stengle representations for sparse positive polynomials (proof of Theorem~\ref{spkrivrep_th}). %using \cite[Lemma 3]{schweighofer06}, instead of Putinar's Theorem as in \cite{sbsos}.
%
%Both of them have the advantage of being linear in the number of error terms.
% While Bernstein is the fastest method (in the case of the C++ implementation without rational arithmetic), it is limited to box domain (or equivalent to a box by a linear transformation). Krivine-Stengle method is slightly slower, but it carries the advantage of being applicable on more general domain, similarly to the SDP relaxations exposed in \cite{real2float}. 
\item[$\blacktriangleright$] We release two software packages based on each method. The first one, called \texttt{FPBern}\footnote{ \url{https://github.com/roccaa/FPBern} }, computes the bounds using the Bernstein expansions, with two modules built on top of the software related to \cite{dreossiHSCC}: \texttt{FPBern(a)} is a \texttt{C++} module using double precision floating point arithmetic while \texttt{FPBern(b)} is a \texttt{Matlab} module using rational arithmetic. % through \texttt{Matlab} \texttt{symbolic toolbox}. 
%These two modules are . 
The second one \texttt{FPKriSten}\footnote{ \url{https://github.com/roccaa/FPKriSten} } computes the bounds using Krivine-Stengle representations in \texttt{Matlab}. \texttt{FPKriSten} is built on top of the implementation related to \cite{sbsos}.
\item[$\blacktriangleright$] We compare our two methods implemented in \texttt{FPBern} and \texttt{FPKriSten} to three state-of-the-art methods. Our new methods have similar precision with the compared tools (\texttt{Real2Float, Rosa, FPTaylor}). At the same time, \texttt{FPBern(a)} shows an important time performance improvement, while \texttt{FPBern(b)} and \texttt{FPKriSten} has similar time performances compared with the other tools, yielding promising results. 
%Even if the current implementation is limited on the type of programs it can handle, we can still provide a proof of concept for both of the proposed methods. 
\end{itemize}
The rest of the paper is organized as follows: in Section \ref{preliminaries}, we give basic background on floating point arithmetic, the Bernstein expansions and Krivine-Stengle representations. In Section \ref{contributions} we give the main contributions, that is the computation of roundoff error bounds using Bernstein expansions and sparse Krivine-Stengle representations. Finally, in Section \ref{implementation} we compare the performance and precision of our two methods with the existing tools, and show the advantages of our tools.
%In Section \ref{contributions} we will give the details on the methods to efficiently compute the roundoff error using the Bernstein expansion and the Krivine-Stengle representation. Finally, in the Section \ref{implementation} we will compare our results to three recent tools: \texttt{FPTaylor}, \texttt{Rosa}, and \texttt{Real2Float}.\\
\section{Preliminaries}
\label{preliminaries}
We first recall useful notation on multivariate calculus. %, used hold for Sections \ref{preliminaries} and \ref{contributions}.\\
For $\mathbf{x} = (x_1,\ldots, x_n)\in \mathbb{R}^n$ and the multi-index $\mathbf{\ab} = (\alpha_1,\ldots, \alpha_n)\in \mathbb{N}^n$, we denote by
%\begin{itemize}
 $\xb^{\ab}$ the product $\prod_{i = 1}^n x_i^{\alpha_i}$. We also define $|\ab| = |\alpha_1| + \ldots + |\alpha_n|$, $\bzero=(0,\ldots,0)$ and $\bone = (1,\ldots,1)$.\newline
The notation $\sum_{\ab}$ is the nested sum $\sum_{\alpha_1}\ldots\sum_{\alpha_n}$. Equivalently we have $\prod_{\ab}$ which is equal to the nested product $\prod_{\alpha_1}\ldots\prod_{\alpha_n}$. \newline
Given another multi-index $\mathbf{d} = (d_1,\ldots, d_n)\in \mathbb{N}^{n}$, the inequality $\ab < \mathbf{d}$ (resp.~$\ab \leq \mathbf{d}$) means that the inequality holds for each sub-index: $\alpha_1 < d_1, \ldots, \alpha_n < d_n$ (resp.~$\alpha_1 \leq d_1, \ldots, \alpha_n \leq d_n$). Moreover, the binomial coefficient $\binom{\mathbf{d}}{\ab}$ is the product $\prod_{i = 1}^n \binom{d_i}{\alpha_i}$.\newline
Let $\mathbb{R}[\xb]$ be the vector space of multivariate polynomials.  Given $f\in\mathbb{R}[\xb]$,  we associate a {\em multi-degree} $\mathbf{d} = (d_1, \dots, d_n)$ to $f$, with each $d_i$ standing for the degree of $f$ with respect to the variable $x_i$. 
Then, we can write $f(\mathbf{x}) = \sum_{\gb\leq\mathbf{d}}a_{\gb} \xb^{\gb}$, with $a_{\gb}$ (also noted $(f)_{\gb}$) being the coefficients of $f$ in the monomial basis and each $\gb\in \mathbb{N}^n$ is a multi-index. The degree $d$ of $f$ is given by $d := \max_{\{\gb : a_{\gb}\neq 0\}} | \gb |$. As an example, if $f(x_1,x_2) = x_1^4x_2 +x_1^1x_2^3$ then $\mathbf{d} = (4,3)$ and $d = 5$. For the polynomial $l$ used in Section~\ref{overview}, one has $\mathbf{d} = (2,1,1,1)$ and $d = 3$.
%\end{itemize}
%
\subsection{Floating Point arithmetic}
\label{preliminaries_floating_point}
This section gives background on floating point arithmetic, inspired from material available in~\cite[Section 3]{fptaylor}. The IEEE754 standard \cite{IEEE754} defines a binary floating point number as a triple significant, sign, and exponent (denoted by $sig,sgn,exp$) which represents the numerical value of $(-1)^{sgn}\times sig \times 2^{exp}$. The standard describes 3 formats (32, 64, and 128 bits) which vary by the size of the significant and the exponent, as well as special values (such as NaN, the infinities).
% but we will not go into details about them.
Denoting by $\mathbb{F}$ the set of floating point numbers, we call rounding operator, the function $\textnormal{rnd}:\mathbb{R}\rightarrow\mathbb{F}$ which takes a real number and returns the closest floating point number rounded to the nearest, toward zero, or toward $\pm\infty$. A simple model of rounding is given by the following formula:
\[ \textnormal{rnd}(x) = x(1+e) + u,\]
with $|e|\leq\varepsilon$, $|u|\leq \mu$ and $eu=0$. The value $\varepsilon$ is the maximal relative error (given by the machine precision \cite{IEEE754}), and $\mu$ is the maximal absolute error for numbers very close to $0$. For example, in the single (32 bits) format, $\varepsilon$ is equal to $2^{-24}$ while $\mu$ equals $2^{-150}$. It is clear that in general $\mu$ is negligible compared to $\varepsilon$, thus we neglect terms depending on $u$ in the remainder of this paper.\newline
Given an operation op : $\mathbb{R}^n\rightarrow\mathbb{R}$, let $\textnormal{op}_{\textnormal{FP}}$ be the corresponding floating point operation. An operation is exactly rounded when
$\textnormal{op}_{\textnormal{FP}}(\xb) = \textnormal{rdn}(\textnormal{op}(\xb))$, for all $\xb \in \mathbb{R}^n$. \newline
In the IEEE754 standard the following operations are defined as exactly rounded: $+,-,\times,/,\surd,$ and the \texttt{fma} operation\footnote{The \texttt{fma} operator is defined by \texttt{fma}($x,y,z$)=$x \times y+z$.}. It follows that for those operations we have the continuation of the simple rounding model  $\textnormal{op}_{\textnormal{FP}}(\xb) =\textnormal{op}(\xb)(1+e)$. \newline
The previous rounding model is called ``simple" in contrast with more improved rounding model. Given the function $\textnormal{pc}(x) = \max_{k\in\mathbb{Z}}\{2^k : 2^k < x\}$, then the improved rounding model is defined by:
$
\textnormal{op}_{\textnormal{FP}}(\xb) =\textnormal{op}(\xb) +  \textnormal{pc}(\textnormal{op}(\xb))
$, for all $\xb \in \mathbb{R}^n$.
As the function pc is piecewise constant, this rounding model needs design of algorithms based on successive subdivisions, which is not currently handled in our methods. Combining branch and bound algorithms with interval arithmetic is adapted to roundoff error computation with such rounding model, which is the case with \texttt{FLUCTUAT}\cite{fluctuat}, \texttt{Gappa}\cite{gappa}, and \texttt{FPTaylor} \cite{fptaylor}. 
\subsection{Bernstein Expansion of Polynomials}
\label{preliminaries_bernstein}
In this section we give mandatory background on the Bernstein expansion for the contribution detailed in Section \ref{contributions_bernstein}.
Given a multivariate polynomial $f \in \mathbb{R}[\xb]$, we recall how to compute a lower bound of $\underline{f}^* :=\min_{\xb\in[0,1]^n}f(\xb)$.
The next result can be retrieved in~\cite[Theorem 2]{berngarloff}:
\begin{theorem}[Multivariate Bernstein expansion]
Given a multivariate polynomial $f$ and a degree $\kb\geq \db$ with $\db$ the multi-degree of $f$, then the Bernstein expansion of multi-degree $\kb$ of $f$ is given by:\\
\begin{equation}
f(\mathbf{x}) = \sum\limits_{\gb}a_{\gb} \xb^{\gb} = \sum\limits_{\ab\leq\kb} b_{\ab}^{(f)}\mathbf{B}_{\kb,\ab}(\mathbf{x}).
\end{equation}
where $b_{\ab}^{(f)}$ (also denoted by $b_{\ab}$ when there is no confusion) are the Bernstein coefficients (of multi-degree $\kb$) of $f$, and $\Bb_{\kb,\ab}(\xb)$ are the Bernstein basis polynomials defined by $\Bb_{\kb,\ab}(\xb) := \prod_{i = 1}^n B_{k_i,\alpha_i}(x_i)$ and $B_{k_i,\alpha_i}(x_i) := \binom{k_i} {\alpha_i} x_i^{\alpha_i}(1-x_i)^{k_i - \alpha_i}$. The Bernstein coefficients are given by the following formulas:
\begin{equation}
b_{\ab} = \sum_{\bb < \ab}\frac{ \binom{\ab} {\bb} }{ \binom{\kb} {\bb} } a_{\bb}, \quad \bzero\leq \ab \leq \kb.
\end{equation}
\end{theorem}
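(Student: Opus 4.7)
The plan is to reduce to the univariate case by tensor product, then lift to the claim by linearity. First, I would establish the univariate identity
\[
x^{\beta}=\sum_{\alpha=\beta}^{k}\frac{\binom{\alpha}{\beta}}{\binom{k}{\beta}}\,B_{k,\alpha}(x), \qquad 0\le\beta\le k,
\]
by the classical trick of multiplying $x^{\beta}$ by $1=(x+(1-x))^{k-\beta}$ and applying the binomial theorem. Indeed, that gives
\[
x^{\beta}=\sum_{j=0}^{k-\beta}\binom{k-\beta}{j}x^{\beta+j}(1-x)^{k-\beta-j},
\]
and after the change of index $\alpha=\beta+j$, the coefficient of $x^{\alpha}(1-x)^{k-\alpha}$ is $\binom{k-\beta}{\alpha-\beta}$. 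A short factorial manipulation yields the key identity
\[
\binom{k-\beta}{\alpha-\beta}\;=\;\frac{\binom{\alpha}{\beta}}{\binom{k}{\beta}}\,\binom{k}{\alpha},
\]
which is exactly what I need to convert the expansion into the Bernstein basis $B_{k,\alpha}(x)=\binom{k}{\alpha}x^{\alpha}(1-x)^{k-\alpha}$.

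Next, I would take the tensor product over the $n$ variables. Using that $\Bb_{\kb,\ab}(\xb)=\prod_i B_{k_i,\alpha_i}(x_i)$ and that $\binom{\ab}{\bb}=\prod_i\binom{\alpha_i}{\beta_i}$, the univariate identity applied coordinatewise gives, for any multi-index $\bb\le\kb$,
\[
\xb^{\bb}\;=\;\prod_{i=1}^{n}\!\sum_{\alpha_i=\beta_i}^{k_i}\!\frac{\binom{\alpha_i}{\beta_i}}{\binom{k_i}{\beta_i}}B_{k_i,\alpha_i}(x_i)\;=\;\sum_{\bb\le\ab\le\kb}\frac{\binom{\ab}{\bb}}{\binom{\kb}{\bb}}\,\Bb_{\kb,\ab}(\xb).
\]

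Finally, I would plug this into $f(\xb)=\sum_{\bb\le\db} a_{\bb}\xb^{\bb}$ and swap the order of summation. Since the hypothesis $\kb\ge\db$ ensures that every $\bb\le\db$ also satisfies $\bb\le\kb$, and extending $a_{\bb}:=0$ for $\bb\not\le\db$ does no harm, I get
\[
f(\xb)=\sum_{\bb\le\kb}a_{\bb}\sum_{\bb\le\ab\le\kb}\frac{\binom{\ab}{\bb}}{\binom{\kb}{\bb}}\Bb_{\kb,\ab}(\xb)=\sum_{\ab\le\kb}\Biggl(\sum_{\bb\le\ab}\frac{\binom{\ab}{\bb}}{\binom{\kb}{\bb}}\,a_{\bb}\Biggr)\Bb_{\kb,\ab}(\xb),
\]
which is the desired expansion with the stated formula for $b_{\ab}$. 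The only slightly delicate step is verifying the binomial identity $\binom{k-\beta}{\alpha-\beta}=\binom{\alpha}{\beta}\binom{k}{\alpha}/\binom{k}{\beta}$ in the univariate step; everything else is linearity, a tensor product, and a change in the order of summation. Uniqueness of the coefficients follows automatically from the fact that $\{\Bb_{\kb,\ab}\}_{\ab\le\kb}$ is a basis of the space of polynomials of multi-degree at most $\kb$.
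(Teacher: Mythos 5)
Your proof is correct and complete, but note that the paper itself gives no proof of this theorem: it is quoted directly from Garloff (Theorem~2 of the cited reference \cite{berngarloff}), so there is no in-paper argument to compare against. Your route---the univariate identity $x^{\beta}=\sum_{\alpha=\beta}^{k}\frac{\binom{\alpha}{\beta}}{\binom{k}{\beta}}B_{k,\alpha}(x)$ obtained by multiplying $x^{\beta}$ by $(x+(1-x))^{k-\beta}$, then tensorising over the coordinates and exchanging the order of summation---is the standard derivation underlying the cited result, and the binomial identity $\binom{k-\beta}{\alpha-\beta}=\binom{\alpha}{\beta}\binom{k}{\alpha}/\binom{k}{\beta}$ that you flag as the delicate step checks out directly from the factorial expressions. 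One substantive remark: your computation yields $b_{\ab}=\sum_{\bb\leq\ab}\frac{\binom{\ab}{\bb}}{\binom{\kb}{\bb}}a_{\bb}$ with a \emph{non-strict} inequality, whereas the statement as printed sums over $\bb<\ab$, which under this paper's componentwise convention would exclude $\bb=\ab$ (and would give, e.g., $b_{\bzero}=0$ instead of the correct $b_{\bzero}=a_{\bzero}=f(\bzero)$). The non-strict version you derive is the correct one and matches the cited sources; the printed $<$ is a typo, and your argument in fact exposes it.
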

The Bernstein expansion having numerous properties, we give only four of them which are useful for Section \ref{contributions_bernstein}. For a more exhaustive introduction to Bernstein expansion, as well as some proof of the basic properties, we refer the interested reader to \cite{smithThesis}.
\begin{property}[{Cardinality \cite[(3.14)]{smithThesis}}]
\label{card_prop}
The number of Bernstein coefficients in the Bernstein expansion (of multi-degree $\kb$) is equal to $ (\kb+\bone)^{\bone} = \prod_{i = 1 }^n(k_i+1)~~.$
\end{property}
\begin{property}[{Linearity \cite[(3.2.3)]{smithThesis}}]
\label{lin_prop}
Given two polynomials $p_1$ and $p_2$, one has:%the Bernstein coefficients of $c p_1+p_2$ is the sum of the corresponding Bernstein coefficients of $p_1$ and $p_2$:
$$ b_{\ab}^{(c p_1+p_2)} = c b_{\ab}^{(p_1)} + b_{\ab}^{(p_2)} ~, \quad \forall c \in\mathbb{R},$$
where Bernstein expansions with same multi-degrees are considered.
\end{property}
\begin{property}[{Enclosure \cite[(3.2.4)]{smithThesis}}]
\label{enclosure_prop}
The minimum (resp.~maximum) of a polynomial $f$ over $[0, 1]^n$ can be lower bounded (resp.~upper bounded) by the minimum (resp.~maximum) of its Bernstein coefficients:
$$ \min_{\ab \leq \kb} b_{\ab} \leq f(\xb) \leq \max_{\ab \leq \kb} b_{\ab},~~ \forall \xb\in[0,1]^n ~~.$$ 
\end{property}
\begin{property}[{Sharpness \cite[(3.2.5)]{smithThesis}}]
\label{sharp_prop}
If the minimum (resp.~maximum) of the $b_{\ab}$ is reached for $\ab$ in a corner of the box $[0,k_1]\times\dots\times[0,k_n]$, then $b_{\ab}$ is the minimum (resp.~maximum) of $f$ over $[0,1]^n$.
\end{property}

Property \ref{card_prop} gives the maximal computational cost needed to find a lower bound of $f^*$ for a Bernstein expansion of fixed multi-degree $\kb$. Property \ref{enclosure_prop} is used to bound from below optimal values, while Property \ref{sharp_prop} allows to determine if the lower bound is optimal. 
\subsection{Dense and Sparse Krivine-Stengle Representations}
\label{preliminaries_handelman}
In this section, we first give the necessary background on Krivine-Stengle representations, used in the context of polynomial optimization. Then, we present a sparse version based on \cite{schweighofer06}. These notions are applied later in Section \ref{contributions_handelman}.
\paragraph{\textbf{Dense Krivine-Stengle representations.}}
Krivine-Stengle certificates for positive polynomials can first be found in \cite{krivineanneaux,stengle} (see also \cite[Theorem 1(b)]{bsos}). Such certificates give representations of positive polynomials over a set $\cSet = \{\xb\in\mathbb{R}^n : 0\leq g_i(\xb)\leq 1,~i=1,\dots,p\}$, with $g_1, \dots, g_p \in\mathbb{R}[\xb]$. The compact set $\cSet$ is a basic semialgebraic set, since it is defined as a conjunction of polynomial inequalities. \\
Given $\ab = (\alpha_1,\dots,\alpha_p)$ and $\bb=(\beta_1,\dots,\beta_p)$, let us define the polynomial $h_{\ab,\bb}(\xb) = \mathbf{g}^{\ab} (\mathbf{1} - \mathbf{g})^{\bb} = \prod_{i=1}^p g_i^{\alpha_i}(1-g_i)^{\beta_i}$. \newline
For instance on the two-dimensional unit box, one has $n=p=2$, $\cSet = [0,1]^2 = \{\xb\in\mathbb{R}^2 : 0\leq x_1\leq 1 \,, \ 0\leq x_2\leq 1\}$. With $\ab = (2,1)$ and $\bb = (1,3)$, one has $h_{\ab,\bb}(\xb) = x_1^2 x_2 (1 - x_1) (1 - x_2)^3$.
\begin{theorem}[Dense Krivine-Stengle representations]
\label{dense_stengle_th}
Let $\pfunc\in\mathbb{R}[\xb]$ be a positive polynomial over $\cSet$. Then there exist $k\in\mathbb{N}$ and a finite number of nonnegative weights $\lambda_{\ab,\bb}\geq 0$ such that:
	\begin{equation}
	\label{eq:ks}
		\pfunc(\xb) = \sum_{|\ab+\bb|\leq k}\lambda_{\ab,\bb}h_{\ab,\bb}(\xb), \quad \forall\xb \in \mathbb{R}^n.
	\end{equation}
\end{theorem}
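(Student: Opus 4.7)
The plan is to derive the theorem from Krivine's classical positivity certificate for archimedean preorderings. First I would define the convex cone
$$T := \Big\{ \sum_{\text{finite}} \lambda_{\ab,\bb}\, h_{\ab,\bb} : \lambda_{\ab,\bb} \geq 0 \Big\} \subset \mathbb{R}[\xb],$$
and check that $T$ is a preordering of $\mathbb{R}[\xb]$: closure under addition is immediate, closure under multiplication reduces to the identity $h_{\ab,\bb}\cdot h_{\ab',\bb'} = h_{\ab+\ab',\bb+\bb'}$, and $1 = h_{\bzero,\bzero} \in T$. The goal then becomes: every $\pfunc$ strictly positive on $\cSet$ lies in $T$.

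Next I would verify the archimedean condition for $T$, i.e.,~that for every $q \in \mathbb{R}[\xb]$ some integer $N$ satisfies $N \pm q \in T$. The key ingredient is the multinomial partition of unity
$$1 = \prod_{i=1}^p \bigl(g_i + (1-g_i)\bigr)^{k} = \sum_{\substack{\ab,\bb \\ \alpha_i+\beta_i = k}} \binom{k\bone}{\ab}\, h_{\ab,\bb} \;\in\; T,$$
which, together with the implicit hypothesis that the $g_i$ collectively bound each coordinate $x_j$ (otherwise the desired \emph{polynomial} identity cannot hold on all of $\mathbb{R}^n$), enables a recursive argument that dominates every polynomial by an element of $T$.

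Once archimedeanity is in place, I would close the argument by invoking Krivine's abstract representation theorem (also known as the Kadison--Dubois theorem): any element of $\mathbb{R}[\xb]$ strictly positive on the character set $\{\xb : s(\xb) \geq 0 \ \forall s \in T\} = \cSet$ belongs to $T$, which is exactly the stated conclusion. The main obstacle is the archimedean step itself, since it tacitly forces the $g_i$ to determine a compact and sufficiently rich semialgebraic geometry; outside that setting only a representation valid on $\cSet$ (not a global polynomial identity) can be hoped for. A more constructive alternative, closer in spirit to Section~\ref{preliminaries_bernstein}, would be to reduce to $\cSet = [0,1]^n$ with $g_i(\xb)=x_i$ and observe that the Bernstein coefficients of $\pfunc$ at multi-degree $k\bone$ converge uniformly to $\pfunc(\ab/k)$ and hence become nonnegative for $k$ large enough, yielding the representation directly via the identity $\Bb_{k\bone,\ab}(\xb) = \binom{k\bone}{\ab}\, h_{\ab,\,k\bone-\ab}(\xb)$.
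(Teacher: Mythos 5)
The paper does not actually prove Theorem~\ref{dense_stengle_th}: it is imported from the literature (Krivine, Stengle, and \cite[Theorem 1(b)]{bsos}), so there is no in-paper argument to compare against. Your route is the standard one behind those references and is essentially sound: the cone $T$ generated by the $h_{\ab,\bb}$ is a preprime (closed under products via $h_{\ab,\bb}h_{\ab',\bb'}=h_{\ab+\ab',\bb+\bb'}$, and containing $1=h_{\bzero,\bzero}$ and $\mathbb{R}_{\geq 0}$), and the abstract Krivine/Kadison--Dubois theorem for \emph{archimedean preprimes} then yields membership of any $\pfunc$ strictly positive on the character set, which is $\cSet$. Two remarks. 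First, $T$ is a preprime but generally not a preordering (it need not contain all squares, e.g.\ $(x-1/2)^2\notin T$ on $[0,1]$), so you must invoke the preprime version of the abstract theorem. Second, you are right that a hypothesis is tacitly missing from the statement, but the precise condition is that $g_1,\dots,g_p$ generate $\mathbb{R}[\xb]$ as an $\mathbb{R}$-algebra (with $g_1=x^2$ on $[-1,1]$, compactness holds yet $x+2$ has no representation); given that, the archimedean step is not the multinomial partition of unity (which only re-derives $1\in T$) but the one-line observation that $N-g_i=(N-1)+(1-g_i)\in T$ and that the set of archimedeanly bounded elements is a subalgebra. This hypothesis is satisfied in the only instance the paper uses, namely $\cSet=\mathbf{X}\times\mathbf{E}$ with $g_j=x_j$ and $g_{n+j}=\tfrac12+\tfrac{e_j}{2}$. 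Your Bernstein alternative for the box case is correct and arguably preferable here: since $\Bb_{k\bone,\ab}=\binom{k\bone}{\ab}h_{\ab,k\bone-\ab}$ and the degree-$k\bone$ Bernstein coefficients converge uniformly to the values $\pfunc(\ab/k)\geq\min_{\cSet}\pfunc>0$, they are eventually nonnegative, giving an explicit, self-contained representation that also connects Theorem~\ref{dense_stengle_th} to the paper's first method.
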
	
It is possible to compute the weights $\lambda_{\ab,\bb}$ by identifying in the monomial basis the coefficients of the polynomials in the left and right sides of~\eqref{eq:ks}. Denoting by $(\pfunc)_{\gb}$ the monomial coefficients of $\pfunc$, with $\gb\in\mathbb{N}_{k}^n := \{\gb\in\mathbb{N}^n : |\gb|\leq k\}$, the $\lambda_{\ab,\bb}$ fulfill the following equalities:
\begin{equation}
\pfunc_{\gb} = \sum_{|\ab+\bb|\leq k}\lambda_{\ab,\bb}(h_{\ab,\bb})_{\gb}, \quad \forall \gb\in\mathbb{N}_k^n.
\end{equation}
\paragraph{\textbf{Global optimization using the dense Krivine-Stengle representations.}}

Here we consider the polynomial minimization problem  $\underline{f}^* : = \min_{\xb\in \cSet }f(\xb)$, with $f$ a polynomial of degree $d$. We can rewrite this problem as the following infinite dimensional problem:
\begin{equation}
	\label{infopti_eq}
     \begin{aligned}
        \underline{f}^* :=& \max\limits_{t\in\mathbb{R}}~~t,\\
        	  & \text{s.t.}~~f(\xb) - t \geq 0 \,, \quad \forall \xb\in  \cSet .  \\   
      \end{aligned}
\end{equation}
The idea is to look for a hierarchy of finite dimensional linear programming (LP) relaxations by using Krivine-Stengle representations of the positive polynomial $\pfunc = f - t$ involved in Problem~\eqref{infopti_eq}. Applying Theorem \ref{dense_stengle_th} to this polynomial, we obtain the following LP problem for each $k \geq d$:
	\begin{equation}
	\label{eq:pk}
	     \begin{aligned}
        	  p^*_k := \max\limits_{t,\lambda_{\ab,\bb}} 
        	  \quad & t,\\
        	  \text{s.t } \quad & (f - t)_{\gb} = \sum_{|\ab+\bb|\leq k}\lambda_{\ab,\bb}(h_{\ab,\bb})_{\gb} \,, \quad \forall \gb\in\mathbb{N}_k^n \,,\\   
      		  \quad & \lambda_{\ab,\bb}\geq 0.
      	   \end{aligned}
	\end{equation}
As in~\cite[(4)]{bsos}, one has:
\begin{theorem}[Dense Krivine-Stengle LP relaxations]
\label{denseLPrelax_th}
The sequence of optimal values $(p^*_k)$ satisfies  $p^*_k\rightarrow \underline{f}^*$ as $k\rightarrow +\infty$. Moreover each $p^*_k$ is a lower bound of $\underline{f}^*$.
\end{theorem}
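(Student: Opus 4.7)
The plan is to establish the two assertions separately: first that every $p^*_k$ is a lower bound, then that the sequence converges upward to $\underline{f}^*$. The driving observation is that the LP \eqref{eq:pk} differs from the optimization reformulation \eqref{infopti_eq} only in that the positivity constraint $f - t \geq 0$ on $\cSet$ is replaced by the stronger algebraic certificate of positivity provided by Krivine--Stengle; Theorem~\ref{dense_stengle_th} bridges the gap asymptotically.

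For the lower bound $p^*_k \leq \underline{f}^*$, I would take an arbitrary feasible point $(t,\{\lambda_{\ab,\bb}\})$ of the LP. Matching the monomial coefficients $\gb \in \mathbb{N}_k^n$ on both sides of the linear constraints yields the polynomial identity $f(\xb) - t = \sum_{|\ab+\bb|\leq k}\lambda_{\ab,\bb}\,h_{\ab,\bb}(\xb)$. Since $0 \leq g_i(\xb) \leq 1$ on $\cSet$, each $h_{\ab,\bb}(\xb) = \prod_i g_i(\xb)^{\alpha_i}(1-g_i(\xb))^{\beta_i}$ is nonnegative there, and coupled with $\lambda_{\ab,\bb}\geq 0$ this forces $f(\xb) \geq t$ for every $\xb\in \cSet$, whence $t \leq \underline{f}^*$. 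Taking the supremum over feasible $t$ gives $p^*_k \leq \underline{f}^*$.

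For the convergence claim, fix $\epsilon > 0$ and consider $\pfunc := f - (\underline{f}^* - \epsilon)$. By definition of $\underline{f}^*$, $\pfunc(\xb) \geq \epsilon > 0$ on $\cSet$, so $\pfunc$ is strictly positive there. Applying Theorem~\ref{dense_stengle_th} to $\pfunc$ produces some integer $k_\epsilon$ and nonnegative weights $\lambda_{\ab,\bb}$ with $|\ab+\bb|\leq k_\epsilon$ such that $\pfunc(\xb) = \sum \lambda_{\ab,\bb} h_{\ab,\bb}(\xb)$. Identifying coefficients in the monomial basis shows that $(t = \underline{f}^* - \epsilon,\{\lambda_{\ab,\bb}\})$ is feasible for \eqref{eq:pk} at order $k_\epsilon$ (taking $k_\epsilon \geq d$ if necessary), hence $p^*_{k_\epsilon} \geq \underline{f}^* - \epsilon$. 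Combined with the monotonicity $p^*_k \leq p^*_{k+1}$ (any feasible solution at level $k$ lifts to one at level $k+1$ by padding with zero weights) and the already-established upper bound, we conclude $\underline{f}^* - \epsilon \leq p^*_k \leq \underline{f}^*$ for every $k \geq k_\epsilon$, so $p^*_k \to \underline{f}^*$.

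The only genuinely nontrivial ingredient is Theorem~\ref{dense_stengle_th}; without it, we would only get that the LP values are lower bounds with no convergence guarantee. The technical subtlety is that Krivine--Stengle requires \emph{strict} positivity on $\cSet$, which is why the argument works with $\underline{f}^* - \epsilon$ rather than $\underline{f}^*$ itself and the conclusion is convergence rather than attainment at a finite level.
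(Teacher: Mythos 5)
Your proof is correct; note that the paper itself does not prove Theorem~\ref{denseLPrelax_th} but merely cites \cite[(4)]{bsos} for it. Your argument --- nonnegativity of the $h_{\ab,\bb}$ on $\cSet$ together with $\lambda_{\ab,\bb}\geq 0$ for the lower-bound direction, and applying Theorem~\ref{dense_stengle_th} to the strictly positive perturbation $f-(\underline{f}^*-\epsilon)$ plus zero-padding monotonicity for convergence --- is exactly the standard one, and is the same $\varepsilon$-shift argument the paper uses to prove its sparse analogue, Lemma~\ref{sparseLPrelax_th}.
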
 
At fixed $k$, the total number of variables of Problem~\eqref{eq:pk} is given by the number of $\lambda_{\ab,\bb}$ and $t$, that is $\binom{2n+k}{k} +1$. The number of constraints is equal to the cardinality of $\mathbb{N}_{k}^n$, which is $\binom{n+k}{k}$.
\paragraph{\textbf{Sparse Krivine-Stengle representations.}}
We now explain how to derive less computationally expensive LP relaxations, by relying on sparse Krivine-Stengle representations. 
 For $I \subseteq \{1,\dots,n\}$, let $\mathbb{R}[\xb, I]$ be the ring of polynomials restricted to the variables $\{x_i~:~i\in I \}$. 
% The ring $\mathbb{R}[\xb, I ]$ is a subring of $\mathbb{R}[\xb]$ and $g\in\mathbb{R}[\xb, I]$ implies $g:\mathbb{R}^{\# I}\rightarrow\mathbb{R}$ as well as $g:\mathbb{R}^{n}\rightarrow\mathbb{R}$.\\
We borrow the notion of a sparsity pattern from \cite[Assumption 1]{sbsos}:
\begin{definition}[Sparsity Pattern]
\label{sparse_def}
Given $m\in\mathbb{N}$, $I_j\subseteq\{1,\dots,n\}$, and $J_j \subseteq\{1,\dots,p\}$ for all $j = 1,\dots,m$, a sparsity pattern is defined by the four following conditions:
%% Note for redaction later:
% n-> number of variable
% p-> number of constraint gi
% m-> number of sparse blocks
% r-> iteration index on the sparse blocks
\begin{itemize}
\item $f$ can be written as: $f = \sum_{j=1}^{m}f_j$ with $f_j\in\mathbb{R}[\xb,I_j]$, 
\item $g_i \in\mathbb{R}[\xb,I_j]$ for all $i \in J_j$, for all $j = 1, \dots, m$,
\item $\bigcup_{j=1}^m I_j = \{1,\dots,n\}$ and $\bigcup_{j=1}^m J_j = \{1,\dots,p\}$,
\item (Running Intersection Property)~for all $j=1,\dots,m-1$, there exists $s\leq j$ s.t. $I_{j+1} \cap \bigcup_{i=1}^j I_i \subseteq I_s$.
\end{itemize}
\end{definition}
As an example, the four conditions stated in Definition~\ref{sparse_def} are satisfied while considering $f(\xb) = x_1 x_2 + x_1^2 x_3$ on the hypercube $\cSet = [0, 1]^3$. Indeed, one has $f_1(\xb) = x_1 x_2 \in \mathbb{R}[\xb,I_1]$, $f_2(\xb) = x_1^2 x_3 \in \mathbb{R}[\xb,I_2]$ with $I_1 = \{1,2\}$, $I_2 = \{1,3\}$. Taking $J_1 = I_1$ and $J_2 = I_2$, one has $g_i = x_i \in \mathbb{R}[\xb,I_j]$ for all $i \in I_j$, $j=1,2$.

Let us consider a given sparsity pattern as stated above. By noting $n_j = |I_j|$, $p_j = |J_j|$, then the set $ \cSet  = \{\xb\in\mathbb{R}^n : 0\leq g_i(\xb)\leq 1, \, i=1,\dots,p\}$ yields subsets $ \cSet_j = \{\xb\in\mathbb{R}^{n_j} : 0\leq g_i(\xb)\leq 1,~i\in J_j\}$, with $j = 1,\dots,m$. If $ \cSet $ is a compact subset of $\mathbb{R}^n$ then each $\cSet_j$ is a compact subset of $\mathbb{R}^{n_j}$.
As in the dense case, let us note $h_{\ab_j,\bb_j} :=  \mathbf{g}^{\ab_j}(\mathbf{1}- \mathbf{g})^{\bb_j}$, for given $\ab_j, \bb_j \in \mathbb{N}^{n_j}$. \newline

The following result, a sparse variant of Theorem~\ref{dense_stengle_th}, can be retrieved from~\cite[Theorem 1]{sbsos} but we also provide here a shorter alternative proof by using~\cite{schweighofer06}.
\begin{theorem}[Sparse Krivine-Stengle representations]
\label{spkrivrep_th}
Let $f,g_1,\dots,g_p\in\mathbb{R}[\xb]$ be given and assume that there exist $I_j$ and $J_j$, $j=1,\dots,m$, which satisfy the four conditions stated in Definition~\ref{sparse_def}. If $f$ is positive over $\mathbf{K}$, then there exist $\phi_j \in \mathbb{R}[\xb,I_j]$, $j=1,\dots,m$ such that $f=\sum_{j=1}^m\phi_j$ and $\phi_j > 0$ over $\cSet_j$. In addition, there exist $k\in\mathbb{N}$ and finitely many nonnegative weights $\lambda_{\ab_j,\bb_j}$, $j=1,\dots,m$, such that:
\begin{equation}
\phi_j = \sum_{|\ab_j+\bb_j|\leq k}\lambda_{\ab_j,\bb_j} h_{\ab_j,\bb_j}~, \quad j=1,\dots,m.
\end{equation}
\end{theorem}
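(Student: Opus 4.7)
The plan is to reduce the sparse statement to the dense representation (Theorem~\ref{dense_stengle_th}) by first decomposing $f$ into pieces that are localized on the subsets of variables $I_j$ and strictly positive on the corresponding sliced sets $\cSet_j$, and then applying the dense Krivine-Stengle theorem separately to each piece.

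First I would invoke the sparse decomposition lemma of Schweighofer~\cite{schweighofer06}. Under the Running Intersection Property of Definition~\ref{sparse_def} and the compactness of $\cSet$, this result states that any polynomial $f = \sum_{j=1}^m f_j$ with $f_j \in \mathbb{R}[\xb, I_j]$ which is strictly positive on $\cSet$ can be rewritten as $f = \sum_{j=1}^m \phi_j$ with $\phi_j \in \mathbb{R}[\xb, I_j]$ and each $\phi_j$ strictly positive on $\cSet_j$. The idea behind this step is to exploit the chain property: at each level $j+1$, the overlap $I_{j+1}\cap\bigcup_{i\le j}I_i$ sits inside a single previous block $I_s$, which makes it possible to transfer strictly positive constants (coming from a uniform positive lower bound of $f$ on the compact $\cSet$) from one block to its neighbors without leaving the appropriate sub-rings $\mathbb{R}[\xb, I_j]$, adjusting each partial sum so that positivity is maintained on the smaller slice $\cSet_j$.

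Second, once the decomposition $f = \sum_j \phi_j$ with $\phi_j > 0$ on $\cSet_j$ is available, I would apply Theorem~\ref{dense_stengle_th} to each $\phi_j$, viewed as a polynomial in the $n_j$ variables indexed by $I_j$, on the compact basic semialgebraic set $\cSet_j$ defined by the generators $\{g_i : i\in J_j\}$. This yields, for each $j$, an integer $k_j\in\mathbb{N}$ together with finitely many nonnegative weights $\lambda_{\ab_j,\bb_j}$, $|\ab_j+\bb_j|\le k_j$, such that
\[
\phi_j \;=\; \sum_{|\ab_j+\bb_j|\le k_j}\lambda_{\ab_j,\bb_j}\,h_{\ab_j,\bb_j}\,.
\]
Setting $k := \max_j k_j$ and padding the missing indices with zero weights puts all the sub-representations at the same degree bound, which is exactly the form claimed in the statement.

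The main obstacle is the first step: one has to verify that the decomposition argument of Schweighofer, which was originally presented in the SOS/Putinar setting, is truly an algebraic statement about transferring strict positivity from $\cSet$ down to each $\cSet_j$ along the running intersection, and therefore independent of whether the subsequent positivity certificate is of Putinar (quadratic module) or Krivine-Stengle (linear combinations of $h_{\ab,\bb}$) type. The bookkeeping to be checked is that at every step of the telescoping along the chain $I_1,\dots,I_m$, the constants and polynomial summands that get shifted between blocks remain in the correct sub-ring $\mathbb{R}[\xb, I_j]$ and that the redistribution leaves each $\phi_j$ bounded below by a strictly positive constant on $\cSet_j$. Once this is in place, the conclusion follows immediately from the dense case.
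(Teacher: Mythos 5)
Your proposal is correct and follows essentially the same route as the paper: it invokes \cite[Lemma 3]{schweighofer06} to decompose $f$ into summands $\phi_j \in \mathbb{R}[\xb,I_j]$ positive on each $\cSet_j$, applies the dense representation of Theorem~\ref{dense_stengle_th} to each $\phi_j$, and uniformizes the degree bound by taking $k=\max_j k_j$ and padding with zero weights. Your additional remark that the decomposition lemma is a purely algebraic positivity-transfer statement, independent of whether the subsequent certificate is of Putinar or Krivine--Stengle type, is a sound observation that the paper leaves implicit.
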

\begin{proof}
From \cite[Lemma 3]{schweighofer06}, there exist $\phi_j \in \mathbb{R}[\xb,I_j]$ such that $f=\sum_{j=1}^m\phi_j$ and $\phi_j>0$ on $\mathbf{K}_j$. 
Applying Theorem~\ref{dense_stengle_th} on each $\phi_j$, there exist $k_j\in\mathbb{N}$ and finitely many nonnegative weights $\lambda_{\ab_j,\bb_j}$ such that $\phi_j = \sum_{|\ab_j+\bb_j|\leq k_j}\lambda_{\ab_j,\bb_j}h_{\ab_j,\bb_j}.$
With $k = \max_{1 \leq j \leq m} \{k_j\}$, we complete the representations with as many zero $\lambda$ as necessary, we obtain the desired result.\hfill\qed
\end{proof}
%
%\begin{remark}
%
In Theorem~\ref{spkrivrep_th}, one assumes that $f$ can be written as the sum $f = \sum_{j=1}^m f_j$, where each $f_j$ is not necessarily positive. The first result of the theorem states that that $f$ can be written as another sum $f = \sum_{j=1}^m \phi^j$, where each $\phi_j$ is now positive.
%\end{remark}
%
As in the dense case, the $\lambda_{\ab_j,\bb_j}$ can be computed by equalizing the coefficients in the monomial basis. 
\if{
Let us note $\Ic_j^k = \{\gb\in\mathbb{N}_k^n~:~\gamma_i=0\textnormal{ if } i \notin I_j \}$. Then the $\lambda_{\ab_j,\bb_j}$ fulfill the following equalities:
\begin{equation}
\label{sparseequal_eq}
     \begin{aligned}
        	  (f(\xb))_{\gb} = & \sum_{j:\gb\in\Ic_j^k}(\phi_j)_{\gb} ~, \quad \forall\gb\in\bigcup_{j=1}^{m}\Ic_j^k~,\\   
      		  (\phi_j)_{\gb} = & \sum_{|\ab_j+\bb_j|\leq k}\lambda_{\ab_j,\bb_j}(h_{\ab_j,\bb_j})_{\gb} \,, \quad j=1,\dots,m .
     \end{aligned}
\end{equation}
}\fi
We also obtain a hierarchy of LP relaxations to approximate the solution of polynomial optimization problems. For the sake of conciseness, we only provide these relaxations as well as their computational costs in the particular context of roundoff error bounds in Section~\ref{contributions_handelman}.

%\footnote{We note that $ (\phi_r)_{\gb}$ can be seen as temporary variables.}

\section{Two new methods to compute roundoff errors bounds}
\label{contributions}
This section is dedicated to our main contributions. We provide two new methods to compute absolute roundoff error bounds using either Bernstein expansions or sparse Krivine-Stengle representations.
Here we consider a given program which implements a polynomial expression $f$ with input variables $\xb$ satisfying a set of input constraints encoded by $\mathbf{X}$. We restrict ourselves to the case where $\mathbf{X}$ is the unit box $[0,1]^n$. \newline
Following the simple rounding model described in Section~\ref{preliminaries_floating_point}, we note $\hat{f}(\xb,\eb)$ the rounded expression of $f$ after introduction of the rounding variables $\eb$ (one additional variable is introduced for each real variable $x_i$ or constant as well as for each arithmetic operation $+$,$\times$ or $-$). For a given machine epsilon $\varepsilon$, these error variables also satisfy a set of constraints encoded by the box $[-\varepsilon, \varepsilon]^m$.
As explained in~\cite[Section 3.1]{real2float}, we can decompose the  roundoff error as follows: $r(\xb, \eb) := \hat{f}(\xb,\eb) - f(\xb) = l(\xb,\eb) + h(\xb, \eb)$, where $l(\xb,\eb) := \sum_{j=1}^m \frac{\partial r(\xb,\eb)} {\partial e_j} (\xb,0)  e_j  = \sum_{j=1}^m s_j(\xb) e_j$. 
One obtains an enclosure of $h$ using interval arithmetic to bound second-order error terms in the Taylor expansion of $r$ w.r.t.~$\eb$ (as in~\cite{fptaylor,real2float}).\newline 
We note $d$ the degree of $l$. %, The maximal value of the linear part of the absolute roundoff error problem 
After dividing each error variable $e_j$ by $\varepsilon$, we now consider the optimization of the (scaled) linear part $l' := l/ \varepsilon$ of the roundoff error. In other words, we focus on computing upper bounds of the maximal absolute value $l'^* := \max_{(\xb, \eb) \in\mathbf{X} \times \mathbf{E} } |l'(\xb,\eb)|$ where $\mathbf{E} = [-1,1]^m$.
%as it allows to describe all the sets similar by a linear transformation. 
%Moreover, as in Section \ref{overview}, we take $\eb\in\mathbf{E} =[-1,1]^m$ as the optimal values are $\varepsilon$ proportional to the optimal values on $[-\varepsilon,\varepsilon]^m$ . 
\subsection{Bernstein expansions of roundoff errors}
\label{contributions_bernstein}
The first method is the approximation of $l'^*$ with the Bernstein expansions. Let $\db$ be the  multi-degree of $l'_{\eb}$. From the above definition of $l$, note that $\db$ is also the multi-degree of $f$. 
For each $\kb \geq \db$, let us note $\overline{l'_{\kb}} := \max_{\ab \leq \kb}\sum_{j = 1}^m |b_{\ab}^{(s_j)}|$ and $\underline{l'_{\kb}} := - \overline{l'_{\kb}}$.
Our procedure is based on the following lemma:
%
%One first writes $l(\xb,\eb)$ as: $l(\xb,\eb) = \sum\limits_{r=1}^m s_r(\xb)e_r$, where $s_1(\xb),\dots,s_n(\xb)$ are polynomials of degree at most $\mathbf{d}$. Then, the following lemma apply:
%
\begin{lemma}%[Bernstein expansion of roundoff errors]
\label{bernth}
For each $\kb \geq \db$, the polynomial $l'(\xb,\eb)$ can be bounded as follows:
%over $\mathbf{X} \times \mathbf{E}$ is given by:
\begin{equation}
\underline{l'_{\kb}}  \leq l'(\xb,\eb) \leq \overline{l'_{\kb}} \,,  \quad \forall (\xb,\eb) \in \mathbf{X} \times \mathbf{E} \,.
\end{equation} 
\end{lemma}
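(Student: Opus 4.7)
The plan is to exploit two structural facts. First, $l'$ is linear in $\eb$, so the absolute value $|l'(\xb,\eb)|$ can be bounded immediately for $\eb\in\mathbf{E}=[-1,1]^m$ in terms of the coefficients $|s_j(\xb)|$. Second, each $s_j$ is a polynomial in $\xb$ whose multi-degree is at most $\db\leq\kb$, so the multivariate Bernstein expansion of Section~\ref{preliminaries_bernstein} applies. Carefully combining these two observations with the non-negativity and the partition-of-unity property of the Bernstein basis should give the tight constant $\max_{\ab\leq\kb}\sum_j |b_\ab^{(s_j)}|$ in the statement.

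First, I would write $l'(\xb,\eb)=\sum_{j=1}^{m}s_j(\xb)\,e_j$ and bound
$$|l'(\xb,\eb)|\leq \sum_{j=1}^m |s_j(\xb)|\,|e_j|\leq \sum_{j=1}^m |s_j(\xb)|,$$
using $|e_j|\leq 1$ on $\mathbf{E}$. Next, I would fix $\kb\geq\db$ and apply the multivariate Bernstein expansion theorem to each $s_j$:
$$s_j(\xb)=\sum_{\ab\leq\kb} b_{\ab}^{(s_j)}\,\Bb_{\kb,\ab}(\xb).$$
Since each univariate factor $B_{k_i,\alpha_i}(x_i)=\binom{k_i}{\alpha_i}x_i^{\alpha_i}(1-x_i)^{k_i-\alpha_i}$ is non-negative on $[0,1]$, the tensor-product basis polynomial $\Bb_{\kb,\ab}$ is non-negative on $\mathbf{X}=[0,1]^n$, so the triangle inequality yields
$$|s_j(\xb)|\leq \sum_{\ab\leq\kb} |b_{\ab}^{(s_j)}|\,\Bb_{\kb,\ab}(\xb).$$

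Summing over $j$ and swapping the order of summation gives
$$\sum_{j=1}^m |s_j(\xb)|\leq \sum_{\ab\leq\kb}\Bigl(\sum_{j=1}^m |b_{\ab}^{(s_j)}|\Bigr)\Bb_{\kb,\ab}(\xb).$$
Now I would invoke the partition of unity $\sum_{\ab\leq\kb}\Bb_{\kb,\ab}(\xb)=1$, which follows directly from the binomial identity $\bigl(x_i+(1-x_i)\bigr)^{k_i}=1$ multiplied over $i=1,\dots,n$ (equivalently, it is the Bernstein expansion of the constant polynomial~$1$ together with Property~\ref{lin_prop}). Combined with $\Bb_{\kb,\ab}(\xb)\geq 0$, this gives
$$\sum_{j=1}^m |s_j(\xb)|\leq \max_{\ab\leq\kb}\sum_{j=1}^m|b_{\ab}^{(s_j)}|=\overline{l'_{\kb}},$$
and together with the first step we obtain $|l'(\xb,\eb)|\leq\overline{l'_{\kb}}$ on $\mathbf{X}\times\mathbf{E}$, i.e. $\underline{l'_{\kb}}\leq l'(\xb,\eb)\leq\overline{l'_{\kb}}$.

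The only genuine subtlety is getting the tight constant $\max_{\ab}\sum_{j}|b_{\ab}^{(s_j)}|$ rather than the weaker $\sum_{j}\max_{\ab}|b_{\ab}^{(s_j)}|$; this is precisely why one should push the absolute value \emph{inside} the Bernstein expansion of each $s_j$ and use the partition of unity to aggregate, instead of applying Property~\ref{enclosure_prop} coefficient-wise to each $s_j$ and then summing. The rest is routine: linearity of $l'$ in $\eb$ on $\mathbf{E}$ and non-negativity of the Bernstein basis on $\mathbf{X}$.
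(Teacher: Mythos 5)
Your proof is correct, but it follows a genuinely different route from the paper's. The paper fixes $\eb$ and views $l'_{\eb}$ as a single polynomial in $\xb$: it applies the Enclosure property (Property~\ref{enclosure_prop}) to $l'_{\eb}$, uses Linearity (Property~\ref{lin_prop}) to write each Bernstein coefficient as $b_{\ab}^{(l'_{\eb})}=\sum_{j=1}^m e_j b_{\ab}^{(s_j)}$, and then optimizes over $\eb\in[-1,1]^m$ by swapping the two minima, so that $\min_{\eb}\min_{\ab}\sum_j e_j b_{\ab}^{(s_j)}=-\max_{\ab}\sum_j|b_{\ab}^{(s_j)}|$. You instead eliminate $\eb$ first via $|l'(\xb,\eb)|\le\sum_j|s_j(\xb)|$, expand each $s_j$ separately, and aggregate using non-negativity and the partition of unity of the Bernstein basis --- two facts the paper does not list among its four properties but which you correctly derive from the binomial identity (and which are the standard proof of Property~\ref{enclosure_prop} anyway). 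Both arguments land on the same tight constant $\max_{\ab\le\kb}\sum_j|b_{\ab}^{(s_j)}|$, and your closing remark about why this beats $\sum_j\max_{\ab}|b_{\ab}^{(s_j)}|$ is the right observation; the paper avoids that pitfall by keeping $l'_{\eb}$ whole rather than bounding each $s_j$ separately. What the paper's formulation buys is a closer match to the implementation (Remark~\ref{rk:cost1}: the coefficients $b_{\ab}^{(l'_{\eb})}$ are computed symbolically in $\eb$ and then optimized over $[-1,1]^m$), whereas yours is more self-contained and makes the mechanism of the bound explicit. One cosmetic slip: your exchange ``there exists $k$ such that...'' phrasing is absent here, but you do write $|e_j|\le 1$ ``on $\mathbf{E}$'' and conclude a two-sided bound from a bound on $|l'|$; that is fine since $\underline{l'_{\kb}}=-\overline{l'_{\kb}}$ by definition.
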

\begin{proof}
We write $l'_{\eb}\in\mathbb{R}[\xb]$ the polynomial $l'(\xb,\eb)$ for a given $\eb\in \mathbf{E}$. Property~\ref{enclosure_prop} provides the enclosure of $l'_{\eb}(\xb)$ w.r.t. $\xb$ for a given $\eb\in \mathbf{E}$:
%Property~\ref{enclosure_prop} provides the enclosure of $l'(\xb,\eb)$ w.r.t. $\xb$ for a given $\eb\in \mathbf{E}$:
%\begin{equation}
%\min_{\ab} b_{\ab}^{(l')}(\eb) \leq l'(\xb,\eb)\leq \max_{\ab} b_{\ab}^{(l')}(\eb) \,, \quad \forall \xb\in[0,1]^n \,.
%\end{equation} 
\begin{equation}
\min_{\ab \leq \kb} b_{\ab}^{(l'_{\eb})} \leq l'_{\eb}(\xb)\leq \max_{\ab \leq \kb} b_{\ab}^{(l'_{\eb})} \,, \quad \forall \xb\in[0,1]^n \,,
\end{equation} 
where each Bernstein coefficient satisfies $b_{\ab}^{(l'_{\eb})} = \sum_{j=1}^m e_j b_{\ab}^{(s_j)}$ by Property \ref{lin_prop} (each $e_j$ being a scalar in $[-1,1]$). 
The proof of  the left inequality comes from:
\begin{align*}
\min_{\eb\in[-1,1]^m}\bigl(\min_{\ab \leq \kb}(\sum_{j=1}^m e_j b_{\ab}^{(s_j)})\bigr) &
= \min_{\ab \leq \kb}\bigl( \min\limits_{\eb\in[-1,1]^m}(\sum\limits_{j=1}^m e_j b_{\ab}^{(s_j)})\bigr)\\
  &= \min\limits_{\ab \leq \kb} \sum\limits_{j=1}^m -|b_{\ab}^{(s_j)}| 
  = - \max\limits_{\ab \leq \kb} \sum\limits_{j=1}^m |b_{\ab}^{(s_j)}|~~.
\end{align*}
%  &= \min\limits_{\ab} -1\cdot\sum\limits_{j=1}^m |b_{\ab}^{(s_j)}|\\
The proof of the right inequality is similar.\hfill\qed
\end{proof}
\begin{remark}
\label{rk:cost1}
The computational cost of $\underline{l'_{\kb}}$ is  $m (\kb + \bone)^{\bone}$  since we need to compute the Bernstein coefficients for each $s_j(\xb)$. This cost is polynomial in the degree and exponential in $n$ but is linear in $m$.
In the implementation described in Section \ref{implementation}, we first compute each $b_{\ab}^{(l'_{\eb})}$ as a function of  $\eb$ and then optimize afterwards over $[-1, 1]^m$.
\end{remark}
\begin{example}
\label{bernex}
For the polynomial $l$ defined in~\eqref{eq:l} (Section~\ref{overview}),  one has $l(x,\eb) = (2x^2-x) e_1 + x^2 e_2 + (x^2-x) e_3$. Applying the above method with $\kb = \db = 2$, one considers the following Bernstein coefficients:
\[ 
b_0^{(l'_{\eb})} = 0, \quad b_1^{(l'_{\eb})} = -\frac{e_1}{2}-\frac{e_3}{2}, \quad b_2^{(l'_{\eb})} = e_1+e_2.
\]
The number of Bernstein coefficients w.r.t.~$x$ is $3$, which is much lower than the one w.r.t.~$(x,\eb)$, which is equal to $24$. 
One can obtain an upper bound (resp.~lower bound)  by taking the maximum (resp.~minimum) of the Bernstein coefficients. 
In this case, $\max_{\eb\in [-1,1]^3} b_1^{(l'_{\eb})} = 0$, $\max_{\eb\in [-1,1]^3} b_2^{(l'_{\eb})} = 1$ and $\max_{\eb\in [-1,1]^3} b_3^{(l'_{\eb})} = 2$. Thus, one obtains $\overline{l'_{\kb}} = 2$ as an upper bound of  $l'^*$  yielding $l^* \leq 2\varepsilon$.
\end{example}
\subsection{Sparse Krivine-Stengle representations of roundoff errors}
\label{contributions_handelman}
Here we explain how to compute lower bounds of $\underline{l'} := \min_{(\xb,\eb) \in \mathbf{X} \times \mathbf{E}} l'(\xb,\eb)$ by using sparse Krivine-Stengle representations. 
We obtain upper bounds of $\overline{l'} := \max_{(\xb,\eb) \in \mathbf{X} \times \mathbf{E}} l'(\xb,\eb)$ in a similar way.

For the sake of consistency with Section~\ref{preliminaries_handelman}, we introduce the variable $\yb \in \mathbb{R}^{n+m}$ defined by $y_j := x_j$, $j=1,\dots,n$ and $y_j := e_{j-n}$, $j=n+1,\dots,n+m$.
Then, one can write the set $\cSet = \mathbf{X}\times\mathbf{E}$  as follows:
 \begin{equation}
 \cSet = \{\yb\in\mathbb{R}^{n+m} : 0 \leq g_j(\yb) \leq 1 \,, \quad j=1,\dots, n+m \} \,,
  \end{equation}
with  $g_j(\yb) := x_j$, for each $j=1,\dots,n$ and $g_j(\yb) := \frac{1}{2}+\frac{e_j}{2}$, for each $j=n+1,\dots,n+m$. 
\begin{lemma}
\label{pattern}
For each $j=1, \dots, m$, let us define $I_j := \{1,\dots,n,n+j\}$ and $J_j := I_j$. Then the sets $I_j$ and $J_j$ satisfy the four conditions stated in~Definition~\ref{sparse_def}.
\end{lemma}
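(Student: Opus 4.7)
The plan is to verify, one at a time, the four conditions of Definition~\ref{sparse_def}, rewritten in the ambient variable $\yb \in \mathbb{R}^{n+m}$ (so that the ``$n$'' of the definition becomes $n+m$, and the ``$p$'' becomes $n+m$ as well, since we have exactly $n+m$ constraint polynomials $g_j$).

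First, for the decomposition condition, I would exploit the structure $l'(\xb,\eb) = \sum_{j=1}^m s_j(\xb)\,e_j$ established in Section~\ref{contributions}. Setting $l'_j(\yb) := s_j(y_1,\dots,y_n)\,y_{n+j}$, each $l'_j$ involves only variables indexed by $\{1,\dots,n\}\cup\{n+j\}=I_j$, so $l'_j \in \mathbb{R}[\yb,I_j]$ and $l' = \sum_{j=1}^m l'_j$.

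Next, for the second condition I would check the two cases in the definition of the $g_i$'s: if $i\in\{1,\dots,n\}$ then $g_i(\yb)=y_i$ and $i\in I_j=J_j$ for every $j$, so $g_i\in\mathbb{R}[\yb,I_j]$; if $i=n+j$ then $g_i(\yb)=\tfrac{1}{2}+\tfrac{1}{2}y_{n+j}$ depends only on $y_{n+j}$ and $n+j\in I_j$. The third condition is immediate since the union $\bigcup_{j=1}^m I_j$ contains $\{1,\dots,n\}$ (already present in every $I_j$) together with each $n+j$, hence equals $\{1,\dots,n+m\}$; the same holds for the $J_j$.

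The only condition with any content is the running intersection property, and it is where I would be most careful. For any $j=1,\dots,m-1$, I would compute
\begin{equation*}
I_{j+1} \cap \bigcup_{i=1}^j I_i \;=\; \bigl(\{1,\dots,n\}\cup\{n+j+1\}\bigr)\cap \bigl(\{1,\dots,n\}\cup\{n+1,\dots,n+j\}\bigr) \;=\; \{1,\dots,n\},
\end{equation*}
since $n+j+1$ is not contained in any $I_i$ for $i\leq j$. As $\{1,\dots,n\}\subseteq I_s$ for every $s$, one can take $s=1$ (or any $s\leq j$), so the property holds. I do not foresee any real obstacle: the construction is precisely tailored so that the shared ``core'' $\{1,\dots,n\}$ trivialises running intersection, and the remaining conditions follow directly from the definitions.
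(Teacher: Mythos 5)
Your proof is correct and follows essentially the same route as the paper: the decomposition $l'=\sum_j s_j(\xb)\,e_j$ gives the first condition, the second and third are immediate, and the running intersection property follows from the common core $\{1,\dots,n\}$. If anything, your verification of the running intersection property is slightly more careful than the paper's, which only exhibits $I_{j+1}\cap I_j=\{1,\dots,n\}\subseteq I_j$ rather than computing the full intersection $I_{j+1}\cap\bigcup_{i=1}^j I_i$ as the definition requires.
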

\begin{proof}
The first condition holds as $l'(\yb) = l'(\xb,\eb) = \sum_{j=1}^m s_j(\xb, \eb) e_j =  \sum_{j=1}^m s_j(\yb)  e_j$, with $s_j(\yb)\in\mathbb{R}[\yb,I_j]$. The second and third condition are  obvious. The running intersection property  comes from $I_{j+1} \cap I_j = \{1, \dots, n \} \subseteq I_j$. \hfill \qed
\end{proof}
Given $\ab,\bb \in \mathbb{N}^{n+1}$, one can write $\ab = (\ab',\gamma)$ and $\bb = (\bb',\delta)$, for $\ab',\bb' \in \mathbb{N}^{n}$, $\gamma,\delta \in \mathbb{N}$.
In our case, this gives the following formulation for the polynomial $h_{\ab_j,\bb_j}(\yb)= 
\mathbf{g}^{\ab_j}
( \bone - \mathbf{g})^{\bb_j}$:
\[
h_{\ab_j,\bb_j}(\yb)= h_{\ab_j',\bb_j',\gamma_j,\delta_j}(\xb,\eb)= 
\xb^{\ab_j'}
( \bone - \xb)^{\bb_j'}
( \frac{1}{2}+\frac{e_j}{2})^{\gamma_j}
( \frac{1}{2}-\frac{e_j}{2})^{\delta_j}
 \,. \]
For instance, with the polynomial $l'$ considered in Section~\ref{overview} and depending on $x, e_1, e_2, e_3$, one can consider the multi-indices $\ab_1 = (1,2)$, $\bb_1 = (2,3)$ associated to the roundoff variable $e_1$. Then  $h_{\ab_1,\bb_1}(\yb) = x (1 - x)^2 (\frac{1}{2}+\frac{e_1}{2})^2 ( \frac{1}{2}-\frac{e_1}{2})^{3}$.
 
Now, we consider the following hierarchy of LP relaxations, for each $k \geq d$:
\begin{equation}
\label{sparsetheq}
\begin{aligned}
\underline{l'_k} := \max_{t,\lambda_{\ab_j,\bb_j}} \quad & t  \,,  \\
\text{s.t } \quad & l'-t  = \sum_{j=1}^m\phi_j \,,\\   
\quad & \phi_j  =  \sum_{|\ab_j+\bb_j|\leq k}\lambda_{\ab_j,\bb_j} h_{\ab_j,\bb_j} \,, \quad j=1,\dots,m \,, \\
\quad & \lambda_{\ab_j,\bb_j}\geq 0  \,, \quad j=1,\dots,m \,.
\end{aligned}
\end{equation}
Similarly, we obtain $\overline{l'_k}$ while replacing $\max$ by $\min$ and $l'-t$ by $t - l'$ in LP~\eqref{sparsetheq}.
%Then, it follows from Theorem~\ref{spkrivrep_th}:
\begin{lemma}%[Sparse K-S representations of roundoff errors]
\label{sparseLPrelax_th}
The sequence of optimal values $(\underline{l'_k})$ (resp.~$(\overline{l'_k})$) satisfies $\underline{l'_k} \uparrow \underline{l'}$ (resp.~$\overline{l'_k} \downarrow \overline{l'}$) as $k \rightarrow +\infty$. In addition, $l'_k :=  \max \{|\underline{l'_k}|, |\overline{l'_k}| \} \rightarrow l'^*$ as $k \rightarrow +\infty$.
%In addition, each $\underline{l'_k}$  is a lower bound of $\underline{l'}$ and each $\overline{l'_k}$  is an upper bound of $\overline{l'}$. 
\end{lemma}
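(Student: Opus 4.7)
The plan is to establish monotone convergence of the lower sequence $(\underline{l'_k})$ to $\underline{l'}$, treat the upper sequence $(\overline{l'_k})$ symmetrically, and then deduce the absolute value statement by continuity.

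First I would prove the weak feasibility bound $\underline{l'_k}\leq \underline{l'}$. Fix $k\geq d$ and any feasible triple $(t,\{\phi_j\},\{\lambda_{\ab_j,\bb_j}\})$ of LP~\eqref{sparsetheq}. Since each $h_{\ab_j,\bb_j}$ is nonnegative on $\cSet$ and each weight $\lambda_{\ab_j,\bb_j}$ is nonnegative, every $\phi_j$ is nonnegative on $\cSet$, and summing the decomposition yields $l'(\xb,\eb)-t\geq 0$ for all $(\xb,\eb)\in\mathbf{X}\times\mathbf{E}$, hence $t\leq \underline{l'}$; taking the supremum over feasible $t$ gives $\underline{l'_k}\leq \underline{l'}$. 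Monotonicity $\underline{l'_k}\leq \underline{l'_{k+1}}$ is immediate because any feasible point at level $k$ remains feasible at level $k+1$ after padding with zero weights.

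The key step is the reverse bound. Given $\eta>0$, set $t:=\underline{l'}-\eta$, so that $l'-t\geq \eta>0$ on $\cSet$. Lemma~\ref{pattern} provides the required sparsity pattern for $l'$, and the constant $-t$ can be absorbed in the first block via $(s_1(\xb)e_1-t)+\sum_{j\geq 2}s_j(\xb)e_j$, each summand lying in $\mathbb{R}[\yb,I_j]$. Then Theorem~\ref{spkrivrep_th} applies to $l'-t$, yielding some degree $k_\eta\in\mathbb{N}$, polynomials $\phi_j\in\mathbb{R}[\yb,I_j]$, and nonnegative weights $\lambda_{\ab_j,\bb_j}$ with $|\ab_j+\bb_j|\leq k_\eta$ satisfying the constraints of LP~\eqref{sparsetheq} at level $k=k_\eta$. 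Hence $t=\underline{l'}-\eta$ is LP-feasible at level $k_\eta$, giving $\underline{l'_{k_\eta}}\geq \underline{l'}-\eta$; combined with monotonicity and the previous upper bound, this proves $\underline{l'_k}\uparrow \underline{l'}$. The argument for $\overline{l'_k}\downarrow \overline{l'}$ is symmetric: apply Theorem~\ref{spkrivrep_th} to $t-l'$ with $t:=\overline{l'}+\eta$.

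For the last claim, a short case analysis on the signs of $\underline{l'}\leq \overline{l'}$ shows $l'^*=\max\{|\underline{l'}|,|\overline{l'}|\}$, so by continuity of $|\cdot|$ and $\max$, the convergences $\underline{l'_k}\to \underline{l'}$ and $\overline{l'_k}\to \overline{l'}$ immediately yield $l'_k\to l'^*$. The main obstacle I expect is the bookkeeping needed to invoke Theorem~\ref{spkrivrep_th} correctly on $l'-t$: one must check that this shifted polynomial still fits into the sparse decomposition of Lemma~\ref{pattern}, and that the strict positivity hypothesis (not just nonnegativity) holds on $\cSet$, which is precisely why the slack $\eta>0$ is essential rather than taking $t=\underline{l'}$ directly.
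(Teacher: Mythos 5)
Your proof is correct and follows essentially the same route as the paper: monotonicity by padding with zero weights, the reverse bound by applying Theorem~\ref{spkrivrep_th} to the strictly positive polynomial $l'-\underline{l'}+\eta$ via the sparsity pattern of Lemma~\ref{pattern}, and the final claim by continuity. You are in fact slightly more careful than the paper, which leaves implicit both the weak bound $\underline{l'_k}\leq\underline{l'}$ and the observation that the constant $-t$ must be absorbed into one of the blocks $\mathbb{R}[\yb,I_j]$ so that the sparsity pattern still applies.
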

\begin{proof}
By construction $(\underline{l'_k})$ is monotone nondecreasing. 
For a given arbitrary $\varepsilon' > 0$, the polynomial $l'- \underline{l'} + \varepsilon'$ is positive over $\cSet$. By Lemma~\ref{pattern}, the subsets $I_j$ and $J_j$ satisfy the four conditions stated in~Definition~\ref{sparse_def}, so we can apply Theorem \ref{spkrivrep_th} to $l'- \underline{l'} + \varepsilon'$. This yields the existence of $\phi_j$, $j=1,\dots,m$, such that $l'- \underline{l'} + \varepsilon' = \sum_{j=1}^m\phi_j$ and $\phi_j = \sum_{|\ab_j+\bb_j|\leq k} \lambda_{\ab_j,\bb_j} h_{\ab_j,\bb_j}$, $j=1,\dots,m$.
Hence, $(\underline{l'} - \varepsilon',\phi_j,\lambda_{\ab_j,\bb_j})$ is feasible for LP~\eqref{sparsetheq}. 
It follows that there exists $k$ such that $\underline{l'_k} \geq l'-\varepsilon'$.
Since $\underline{l'_k} \leq l'$, and $\varepsilon'$ has been arbitrary chosen, we obtain the convergence result for the sequence $(\underline{l'_k})$. The proof is analogous for $(\overline{l'_k})$ and yields $\max \{|\underline{l'_k}|, |\overline{l'_k}| \} \rightarrow \max \{|\underline{l'}|, |\overline{l'}| \} = l'^*$ as $k \to +\infty$, the desired result.\hfill\qed
\end{proof}
%
%As seen in Section \ref{preliminaries_handelman}, Equation \ref{sparsetheq} is an LP problem, as we can equalize, in the monomial basis, the coefficients between the right and the left sides of the equalities we leads to Equation \ref{sparseequal_eq}. 
\begin{remark}
\label{rk:cost2}
In the special case of roundoff error computation, one can prove that the number of variables of LP~\eqref{sparsetheq} is $m \binom {2(n+1)+k} {k} +1$ with a number of constraints equal to $[\frac{mk}{n+1}+1]\binom{n+k}{k}$. This is in contrast with the dense case where the number of LP variables is $\binom {2(n+m)+k} {k} +1$ with a number of  constraints equal to $\binom {n+m+k}{k}$ .
\end{remark}
\renewcommand*{\proofname}{Proof of Remark 2}
\begin{proof}
As we replace a function $\phi$ of dimension $(n+m)$ by a sum of $m$ functions $\phi_j$ of dimension $(n+1)$, the number of coefficients $\lambda_{\ab_j,\bb_j}$ is $m \binom {2(n+1)+k} {k}$. This leads to a total of $m \binom {2(n+1)+k} {k} +1$ variables when adding $t$.

The number of equality constraints is the number of monomials involved in  $\sum_{j=1}^m\phi_j$. Each $\phi_j$ has $\binom {(n+1)+k} {k}$ monomials. However there are redundant monomials between all the $\phi_j$: the ones depending of only $\xb$, and not $\eb$. These $\binom {n+k} {k}$ monomials should appear only once. This leads to a final number of $m \binom {(n+1)+k} {k} - (m-1)\binom {n+k} {k}$ monomials which is equal to $[\frac{mk}{n+1}+1]\binom{n+k}{k}$.\hfill\qed
\end{proof}

%\footnote{This LP size is given with the $(\phi)_{\gb}$ taken as temporary variables, as we can do the substitution.}.
%
\begin{example}
Continuing Example~\ref{bernex}, for the polynomial $l$ defined in~\eqref{eq:l} (Section~\ref{overview}), we consider LP~\eqref{sparsetheq} at the relaxation order $k = d = 3$. This problem involves $3\binom{2\times(1+1)+3}{3} +1 = 106$ variables and $[\frac{3 \times 3}{2}+1]\binom{4}{3} = 22$ constraints. This is in contrast with a dense Krivine-Stengle representation, where the corresponding LP involves $35$ linear equalities and $166$ variables. Computing the values of $\underline{l'_k}$ and $\overline{l'_k}$ provides  an upper bound of $2$ for $l'^*$, yielding $l^* \leq 2\varepsilon$.
\end{example}
\section{Implementation \& Results}
\label{implementation}
\paragraph{\textbf{The  \texttt{FPBern} and \texttt{FPKriSten} software packages.}}
 We provide two distinct software packages to compute certified error bounds of roundoff errors  for programs implementing  polynomial functions with floating point precision. The first tool \href{https://github.com/roccaa/FPBern}{\texttt{FPBern}} relies on the method from Section \ref{contributions_bernstein} and the second tool \href{https://github.com/roccaa/FPKriSten}{\texttt{FPKriSten}} on the method from Section \ref{contributions_handelman}.\newline
\texttt{FPBern} is built on top of the software presented in \cite{dreossiHSCC} to manipulate Bernstein expansions, which includes  a \texttt{C++} module \texttt{FPBern(a)} and  a \texttt{Matlab} module  \texttt{FPBern(b)}. Their main difference is that Bernstein coefficients are computed with double precision floating point arithmetic in \texttt{FPBern(a)} and with rational arithmetic in \texttt{FPBern(b)}. Polynomial operations are handled with  {\sc GiNaC}~\cite{ginac} in \texttt{FPBern(a)} and with \texttt{Matlab Symbolic Toolbox} in \texttt{FPBern(b)}. 
Note that the Bernstein coefficient computations are not fully certified with \texttt{FPBern(a)} yet. We plan to obtain verified upper bounds by using the framework in~\cite{Garloff08}.
\newline
% Passe Victor The first one computes the Bernstein coefficients with a \texttt{C++} double precision implementation. It uses two libraries: \texttt{GiNaC}~\cite{ginac} for symbolic computation, and \texttt{GLPK}.
%\footnote{Acually, the \texttt{GLPK} library is not used for our current problem.}. 
%It is possible to extend, in the future, to rational arithmetic using external library such as \texttt{GMP} (\texttt{GNU Muliple Precision}). \\
% Passe Victor The second module provides certified error bounds using a rational implementation of Bernstein coefficients with the \texttt{Matlab} \texttt{Symbolic Toolbox}. This implementation is also based on the previous work of \cite{dreossiHSCC}.\\
%
\texttt{FPKriSten} is built on top of the {\sc SBSOS} software related to~\cite{sbsos} which handles sparse polynomial optimization problems by solving a hierarchy of convex relaxations. This hierarchy is obtained by mixing Krivine-Stengle  and Putinar representations of positive polynomials.  
To improve the overall performance in our particular case, we only consider the former representation yielding the hierarchy of LP relaxations~\eqref{sparsetheq}. Among several LP solvers,  {\sc Cplex}~\cite{cplex} yields the best performance in our case (see also~\cite{lp_compare} for more comparisons). Polynomials are handled with the {\sc Yalmip}~\cite{YALMIP} toolbox available for \texttt{Matlab}. Even though the semantics of programs considered in this paper is actually much simpler than that considered by other tools such as \texttt{Rosa}~\cite{rosa} or {\sc Fluctuat}~\cite{fluctuat}, we emphasize that those tools may be combined with external non-linear solvers to solve specific sub-problems, a task that either \texttt{FPBern} or \texttt{FPKriSten} can fulfill.
% Passe Victor \texttt{FPKriSten} implementation computes the roundoff error bounds using a sparse Krivine-Stengle hierarchy based on the previous SBSOS \texttt{Matlab} implementation of \cite{sbsos}. Modification of the SBSOS toolbox associated to \cite{sbsos} has been made to handle the construction of a LP only representation, and to improve the overall performance in our particular situation. Moreover, we chose to use the solver {\sc Cplex}~\cite{cplex} to solve the LP problem (see Section \ref{contributions_handelman}). \texttt{FPKriSten} also needs the {\sc Yalmip}~\cite{YALMIP} toolbox available for \texttt{Matlab} to parse polynomials functions.\\
%The program semantic currently handled by both tools is programs implementing polynomial functions with inputs laying inside a box. 
%
\paragraph{\textbf{Experimental results.}}
We tested our two software packages with $20$ programs (see Appendix~\ref{appendix}) where $12$ are existing benchmarks coming from biology, space control and optimization fields, and $8$ are generated as follows, with $\xb = (x_1,\dots,x_n) \in [-1,1]^n$.
\begin{equation}
	\texttt{ex-n-nSum-deg}(\xb) := \sum_{j=0}^{\texttt{nSum}}(\prod_{k=1}^{\texttt{deg}}(\sum_{i=1}^{\texttt{n}}x_i)) \,.
\end{equation}
The first $9$  programs are used  for similar comparison in~\cite[Section 4.1]{real2float}, the following $3$ come from~\cite{SolovyevH13}. Eventually the 8 generated benchmarks allow to evaluate \emph{independently} the performance of the tools w.r.t.~either the number of input variables (through the variable \texttt{n}), the degree (through \texttt{deg}) or the number of error variables (through \texttt{nSum}). 
%without changing the degree or the dimension of the tested polynomial. 
Taking $\xb \in [-1,1]^n$ allows avoiding monotonicity of the polynomial (which could be exploited by the Bernstein techniques).\\
We recall that each program implements a polynomial function $f(\xb)$ with box constrained  input variables. To provide an upper bound of the absolute roundoff error $| f(\xb)-\hat{f}(\xb,\eb) | = | l(\xb,\eb) + h(\xb,\eb) |$, we rely on~\texttt{Real2Float} to generate  $l$ and to bound $h$ (see~\cite[Section 3.1]{real2float}). Then the optimization methods of Section~\ref{contributions} are applied to bound a function $l'$, obtained after linear transformation of $l$, over the unit box. \newline
At a given multi-degree $\kb$, \texttt{FPBern} computes the  bound $\overline{l'_{\kb}}$ (see Lemma~\ref{bernth}). Similarly, at a given relaxation order $k$, \texttt{FPKriSten} computes the bound  $l'_k$ (see Lemma~\ref{sparseLPrelax_th}). 
To achieve fast computations,  the default value of $\kb$ is the multi-degree $\db$ of $l'_{\eb}$ (equal to the multi-degree of the input polynomial $f$) and the default value of $k$ is the degree $d$ of $l'$ (equal to the successor of the degree of $f$).\newline
The experiments were carried out on an Intel Core i7-5600U (2.60Ghz, 16GB) with Ubuntu 14.04LTS, \texttt{Matlab 2015a}, {\sc GiNaC}~1.7.1, and {\sc Cplex}~12.63.
Our benchmark settings are similar to~\cite[Section 4]{real2float} as we compare the accuracy and execution times of our two tools with \texttt{Rosa real compiler}~\cite{rosa}(version from May 2014), \texttt{Real2Float}~\cite{real2float}(version from July 2016) and \texttt{FPTaylor}~\cite{fptaylor} (version from May 2016) on programs implemented in double precision while considering input variables as real variables. All these tools use a simple rounding model (see Section~\ref{preliminaries_floating_point}) and have been executed  with their default parameters.\newline
\begin{table*}[!t]
\begin{center}
\small
\caption{Comparison results of upper bounds for absolute roundoff errors. %among tools implementing simple rounding models. For each model, 
The best results are emphasized using \textbf{bold fonts}.\label{table:error}}{

\begin{tabular}{lccc|ccc|ccc}
\hline
{Benchmark} &  $n$ & $m$ &  $d$ & {\texttt{FPBern}(a)}  & {\texttt{FPBern}(b)}  & {\texttt{FPKriSten}}  & {\texttt{Real2Float}} & {\texttt{Rosa}}  & {\texttt{FPTaylor}} \\
\hline  
{\texttt{rigidBody1}}& 3 & 10 & 3
& $5.33\text{e--}13$ & $5.33\text{e--}13$ & $5.33\text{e--}13$ & $5.33\text{e--}13$ & $5.08\text{e--}13$ & $\mathbf{3.87\textbf{e--}13}$   \\
{\texttt{rigidBody2}}& 3 & 15 & 5
& $6.48\text{e--}11$ & $6.48\text{e--}11$ & $6.48\text{e--}11$ & $6.48\text{e--}11$ & $6.48\text{e--}11$ & $\mathbf{5.24\textbf{e--}11}$  \\
{\texttt{kepler0}}& 6 & 21 & 3 
& ${1.08\text{e--}13}$ & ${1.08\text{e--}13}$ & ${1.08\text{e--}13}$ & ${1.18\text{e--}13}$ & $1.16\text{e--}13$ & $\mathbf{1.05\textbf{e--}13}$   \\
{\texttt{kepler1}}& 4 & 28 & 4  
& $4.23\text{e--13}$ & $\mathbf{4.04\textbf{e--}13}$ & $4.23\text{e--13}$ & $4.47\textbf{e--}13$ & $6.49\text{e--}13$ & ${4.49\text{e--}13}$    \\
{\texttt{kepler2}}& 6 & 42 & 4
& $\mathbf{2.03\textbf{e--}12}$ & $\mathbf{2.03\textbf{e--}12}$ & $\mathbf{2.03\textbf{e--}12}$ & $2.09\textbf{e--}12$ & $2.89\text{e--}12$ & ${2.10\text{e--}12}$   \\
{\texttt{sineTaylor}} &  1 & 13 & 8
& $5.51\text{e--16}$ & $\mathbf{5.48\textbf{e--}16}$ & $5.51\text{e--16}$ & $6.03\textbf{e--}16$ & $9.56\text{e--}16$ & $6.75\text{e--}16$  \\
{\texttt{sineOrder3}}& 1 & 6 & 4
& $1.35\text{e--}15$ & $1.35\text{e--}15$ & $1.25\text{e--}15$ & $1.19\text{e--}15$ & $1.11\text{e--}15$ & $\mathbf{9.97\textbf{e--}16}$   \\
{\texttt{sqroot}} & 1 & 15 & 5
& $1.29\text{e--}15$ & $1.29\text{e--}15$ & $1.29\text{e--}15$ & $1.29\text{e--}15$ & $8.41\text{e--}16$ & $\mathbf{7.13\textbf{e--}16}$  \\
{\texttt{himmilbeau}}& 2 & 11 & 5
& ${2.00\text{e--}12}$ & ${2.00\text{e--}12}$ & ${1.97\text{e--}12}$ & ${1.43\text{e--}12}$ & ${1.43\text{e--}12}$ & $\mathbf{1.32\textbf{e--}12}$    \\
{\texttt{schwefel}}& 3 & 15 & 5 
& ${1.48\text{e--}11}$ & ${1.48\text{e--}11}$ & ${1.48\text{e--}11}$ & ${1.49\text{e--}11}$ & ${1.49\text{e--}11}$ & $\mathbf{1.03\textbf{e--}11}$    \\
{\texttt{magnetism}} & 7 & 27 & 3 
& ${1.27\text{e--}14}$ & ${1.27\text{e--}14}$ & ${1.27\text{e--}14}$ & ${1.27\text{e--}14}$ & ${1.27\text{e--}14}$ & $\mathbf{7.61\textbf{e--}15}$    \\
{\texttt{caprasse}}& 4 & 34 & 5
& ${4.49\text{e--}15}$ & ${4.49\text{e--}15}$ & ${4.49\text{e--}15}$ & ${5.63\text{e--}15}$ & ${5.96\text{e--}15}$ & $\mathbf{3.04\textbf{e--}15}$    \\
\hline
\hline
{\texttt{ex-2-2-5}}& 2 & 9 & 3
& ${2.23\text{e--}14}$ & ${2.23\text{e--}14}$ & ${2.23\text{e--}14}$  & ${2.23\text{e--}14}$ & ${2.23\text{e--}14}$ & $\mathbf{1.96\textbf{e--}14}$   \\
{\texttt{ex-2-2-10}}& 2 & 14 & 3
& ${5.33\text{e--}14}$ & ${5.33\text{e--}14}$ & ${5.33\text{e--}14}$ & ${5.33\text{e--}15}$ & ${5.33\text{e--}14}$ & $\mathbf{4.85\textbf{e--}14}$  \\
{\texttt{ex-2-2-15}}& 2 & 19 & 3
& ${9.55\text{e--}14}$ & ${9.55\text{e--}14}$ & ${9.55\text{e--}14}$ & ${9.55\text{e--}14}$ & ${9.55\text{e--}14}$ & $\mathbf{8.84\textbf{e--}14}$   \\
{\texttt{ex-2-2-20}}& 2 & 24 & 3
& ${1.49\text{e--}13}$ & ${1.49\text{e--}13}$ & ${1.49\text{e--}13}$ & $\texttt{TIMEOUT}$ &  ${1.49\text{e--}13}$ & $\mathbf{1.40\textbf{e--}13}$    \\
{\texttt{ex-2-5-2}}& 2 & 9 & 6
& ${1.67\text{e--}13}$ & ${1.67\text{e--}13}$ & ${1.67\text{e--}13}$ & ${1.67\text{e--}13}$ & ${1.67\text{e--}13}$ & $\mathbf{1.41\textbf{e--}13}$    \\
{\texttt{ex-2-10-2}}& 2 & 14 & 11
& ${1.05\text{e--}11}$ & ${1.05\text{e--}11}$ & ${1.34\text{e--}11}$ & ${1.05\text{e--}11}$ & ${1.05\text{e--}11}$ & $\mathbf{8.76\textbf{e--}12}$    \\
{\texttt{ex-5-2-2}}& 5 & 12 & 3
& ${8.55\text{e--}14}$ & ${8.55\text{e--}14}$ & ${8.55\text{e--}14}$ & ${8.55\text{e--}14}$ & ${8.55\text{e--}14}$ & $\mathbf{7.72\textbf{e--}14}$    \\
{\texttt{ex-10-2-2}}& 10 & 22 & 3
& ${5.16\text{e--}13}$ & $\texttt{TIMEOUT}$ & ${5.16\text{e--}13}$ & ${5.16\text{e--}13}$ & ${5.16\text{e--}13}$ & $\mathbf{4.82\textbf{e--}13}$    \\
\hline
\end{tabular}
}
\end{center}
\end{table*}

Table \ref{table:error}  shows the result of the absolute roundoff error while Table~\ref{table:cpu} displays execution times obtained through averaging over $5$ runs. For each benchmark, we indicate the number $n$ (resp.~$m$) of input (resp.~error) variables as well as the degree $d$ of $l'$. For \texttt{FPKriSten} the {\sc Cplex} solving time in Table~\ref{table:cpu} is given between parentheses. Note that the overall efficiency of the tool could be improved by constructing the hierarchy of LP~\eqref{sparsetheq} with a \texttt{C++} implementation.

Our two methods yield more accurate bounds for the $3$ benchmarks \texttt{kepler1}, \texttt{sineTaylor} and \texttt{kepler2}, which is  the program involving the largest number of error variables.\\
For \texttt{kepler1}, \texttt{FPBern(a)} and \texttt{FPKriSten} are less precise than \texttt{FPBern(b)} but are still $6\%$ more precise than \texttt{Real2Float} and \texttt{FPTaylor} and  $53\%$ more precise than \texttt{Rosa}.
For \texttt{kepler2}, our two tools are $3\%$ (resp. $42\%$) more precise than \texttt{FPTaylor} and \texttt{Real2Float} (resp. \texttt{Rosa}). In addition, Property~\ref{sharp_prop} holds for these three programs with \texttt{FPBern(b)}, which ensures bound optimality. For all other benchmarks \texttt{FPTaylor} provides the most accurate upper bounds.  Our tools are more accurate than \texttt{Real2Float} except for \texttt{sineOrder3} and \texttt{himmilbeau}. In particular, for \texttt{himmilbeau}, \texttt{FPBern} and \texttt{FPKriSten} are $40\%$ (resp.~$50\%$) less precise than  \texttt{Real2Float} (resp.~\texttt{FPTaylor}). One way to obtain better bounds would be to increase the degree $\kb$ (resp.~ relaxation order $k$) within \texttt{FPBern} (resp.~\texttt{FPKriSten}). Preliminary experiments indicate modest accuracy improvement at the expense of performance.

\begin{table}[!t]
\begin{center}
\small
\caption{Comparison of execution times (in seconds) for absolute roundoff error bounds. 
%among tools implementing simple rounding models. 
%For all model, the winner results are for \texttt{FPBern(a)} emphasized by \textit{italic fonts}.
For each model, the best results are emphasized using \textbf{bold fonts}.
\label{table:cpu}}{
%\begin{tabular}{p{2.3cm}c|ccc|ccc}
%\begin{tabular}{lc|ccc|ccc}
%\hline
%{Benchmark} &  {$(n,m,k)$} & {\texttt{FPBern}(a)}  & {\texttt{FPBern}(b)}  & {\texttt{FPKriSten}}  & {\texttt{Real2Float}} & {\texttt{Rosa}}  & {\texttt{FPTaylor}} \\
%\hline
%{\texttt{rigidBody1}}&$(1,10,3)$ & $\mathbf{0.003}$
%& $0.88$ & $0.22(0.02)$ & $0.58$ & $0.13$ & $1.84$  \\
%{\texttt{rigidBody2}}& $(3,15,5)$ & $\mathbf{0.008}$
%& $1.87$ & $2.78(0.47)$ &$0.26$ & $2.17$ & $3.01$  \\
%{\texttt{kepler0}}&$(6,21,3)$ &  $\mathbf{0.06}$
%& $9.62$ & $1.93(0.18)$ & $0.22$ & $3.78$ & $4.93$ \\
%{\texttt{kepler1}}& $(4,28,4)$ & $\mathbf{0.10}$
%& $6.91$ & $3.93(0.53)$ & $17.6$ & $63.1$ & $9.33$  \\
%{\texttt{kepler2}}&$(6,42,4)$ & $\mathbf{2.06}$
%& $64.90$ & $20.5(3.75)$ & $16.5$ & $106$ & $19.1$  \\
%{\texttt{sineTaylor}}&$(1,13,8)$ & $\mathbf{0.003}$
%& $0.50$ & $0.92(0.27)$ & $1.05$ & $3.50$ & $2.91$  \\
%{\texttt{sineOrder3}}&$(1,6,4)$ & $\mathbf{0.0005}$
%& $0.27$ & $0.08(0.006)$ & $0.40$ & $0.48$ & $1.90$ \\
%{\texttt{sqroot}} &$(1,15,5)$ & $\mathbf{0.002}$
%& $0.34$ & $0.24(0.02)$ & $0.14$ & $0.77$ & $2.70$  \\
%{\texttt{himmilbeau}}&$(2,11,5)$ & $\mathbf{0.01}$
%& $1.72$ & $0.77(0.22)$ & $0.20$ & $2.51$ & $3.28$ \\
%\hline
%\end{tabular}

\begin{tabular}{lccc|ccc|ccc}
\hline
{Benchmark} &  $n$ & $m$ & $d$ & {\texttt{FPBern}(a)}  & {\texttt{FPBern}(b)}  & {\texttt{FPKriSten}}  & {\texttt{Real2Float}} & {\texttt{Rosa}}  & {\texttt{FPTaylor}} \\
\hline
{\texttt{rigidBody1}}& 3 & 10 & 3  &  $\mathbf{5\textbf{e--}4}$
& $0.88$ & $0.22(0.02)$ & $0.58$ & ${0.13}$ & $1.84$  \\
{\texttt{rigidBody2}}& 3 & 15 & 5 & $\mathbf{2\textbf{e--}3}$
& $1.87$ & $2.78(0.47)$ &${0.26}$ & $2.17$ & $3.01$  \\
{\texttt{kepler0}}& 6 & 21 & 3 & $\mathbf{4\textbf{e--}3}$
& $9.62$ & $1.93(0.18)$ & ${0.22}$ & $3.78$ & $4.93$ \\
{\texttt{kepler1}}& 4 & 28 & 4  & $\mathbf{6\textbf{e--}3}$
& $6.91$ & ${3.93(0.53)}$ & $17.6$ & $63.1$ & $9.33$  \\
{\texttt{kepler2}}& 6 & 42 & 4  & $\mathbf{5\textbf{e--}2}$
& $64.9$ & $20.5(3.75)$ & ${16.5}$ & $106$ & $19.1$  \\
{\texttt{sineTaylor}} &  1 & 13 & 8  & $\mathbf{6\textbf{e--}4}$
& ${0.50}$ & $0.92(0.27)$ & $1.05$ & $3.50$ & $2.91$  \\
{\texttt{sineOrder3}}& 1 & 6 & 4  & $\mathbf{2\textbf{e--}4}$
& $0.27$ & ${0.08(0.01)}$ & $0.40$ & $0.48$ & $1.90$ \\
{\texttt{sqroot}} & 1 & 15 & 5  & $\mathbf{2\textbf{e--}4}$
& $0.34$ & $0.24(0.02)$ & ${0.14}$ & $0.77$ & $2.70$  \\
{\texttt{himmilbeau}}& 2 & 11 & 5  & $\mathbf{1\textbf{e--}3}$
& $1.72$ & $0.77(0.22)$ & ${0.20}$ & $2.51$ & $3.28$ \\
{\texttt{schwefel}}& 3 & 15 & 5  & $\mathbf{2\textbf{e--}3}$
& $3.04$ & ${2.90(0.56)}$ & ${0.23}$ & $3.91$ & $0.53$ \\
{\texttt{magnetism}} & 7 & 27 & 3  & $\mathbf{9\textbf{e--}2}$
& $176$ & $3.07(0.26)$ & ${0.29}$ & $1.95$ & $5.91$  \\
{\texttt{caprasse}}& 4 & 34 & 5 & $\mathbf{6\textbf{e--}3}$
& $6.03$ & $18.8(4.89)$ & ${3.63}$ & $17.6$ & $12.2$ \\
\hline
\hline
{\texttt{ex-2-2-5}}& 2 & 9 & 3 &  $\mathbf{4\textbf{e--}4}$
& $0.69$ & $0.12(0.01)$ & $0.07$ & $4.20$ & $2.30$  \\
{\texttt{ex-2-2-10}}& 2 & 14 & 3 &  $\mathbf{5\textbf{e--}4}$
& $0.71$ & $0.17(0.01)$ & $0.35$ & $4.75$ & $3.42$  \\
{\texttt{ex-2-2-15}}& 2 & 19 & 3 &  $\mathbf{6\textbf{e--}4}$
& $0.72$ & $0.23(0.02)$ & $9.75$ & $5.33$ & $4.91$  \\
{\texttt{ex-2-2-20}}& 2 & 24 & 3 &  $\mathbf{8\textbf{e--}4}$
& $0.73$ & $0.28(0.02)$ & $\texttt{TIMEOUT}$ & $6.28$ & $6.27$  \\
{\texttt{ex-2-5-2}}& 2 & 9 & 6 &  $\mathbf{2\textbf{e--}2}$
& $2.34$ & $1.23(0.26)$ & $0.27$ & $4.26$ & $2.53$  \\
{\texttt{ex-2-10-2}}& 2 & 14 & 11 &  $\mathbf{2\textbf{e--}2}$
& $7.34$ & $96.9(58.5)$ & $49.2$ & $9.37$ & $5.07$  \\
{\texttt{ex-5-2-2}}& 5 & 12 & 3 &  $\mathbf{8\textbf{e--}3}$
& $18.3$ & $0.70(0.08)$ & $0.21$ & $4.45$ & $12.3$  \\
{\texttt{ex-10-2-2}}& 10 & 22 & 3 &  ${39.5}$
& $\texttt{TIMEOUT}$ & $6.11(0.6)$ & $30.7$ & $\mathbf{5.34}$ & $34.6$  \\
\hline
\end{tabular}
}
\end{center}
\end{table}
%

%% Ajout du 06/01/2017

\texttt{FPBern(a)} is the fastest for almost all benchmarks  (except program \texttt{ex-10-2-2} where \texttt{Rosa} yields best performance). \texttt{FPBern(b)} is much slower due to its \texttt{Matlab} implementation, and the use of certified rational arithmetic. We plan to implement a similar certification scheme within \texttt{FPBern(a)}.\\
On the first $12$ benchmarks, \texttt{FPBern(a)} is always the fastest while having a similar precision to \texttt{Real2Float} or \texttt{Rosa}. \\%We can further improve the precision by increasing the degree (resp.~relaxation order) of the Bernstein expansion (resp.~Krivine-Stengle representation) yielding better accuracy than \texttt{Rosa} and \texttt{Real2Float}.\\
The results obtained with the $8$ generated benchmarks emphasize the limitations of each method. The Bernstein method performs very well when the number of input variables is low, even if the degree increases, as shown in the results for the 6 programs from \texttt{ex-2-2-5} to \texttt{ex-2-10-2}. This is related to the polynomial dependancy w.r.t.~the degree when fixing the number of input variables.
However, for the last $2$ programs \texttt{ex-5-2-2} and \texttt{ex-10-2-2} where the dimension increases, the computation time increases exponentially. This confirms the theoretical result stated in~Remark~\ref{rk:cost1} as the number of Bernstein coefficients is exponential w.r.t.~the dimension at fixed degree.
%the addressed problem is NP-hard

On the same programs, the method based on Krivine-Stengle representations performs better when the dimension increases, at fixed degree. This confirms the constraint dependency w.r.t.~$[\frac{m k}{n+1}+1] \binom{n+k}{k}$ stated in Remark~\ref{rk:cost2}. 

Results for the $4$ programs from \texttt{ex-2-2-5} to \texttt{ex-2-2-20} also indicate that our methods are the least sensible to an increase of error variables.
We note that \texttt{FPKriSten} is often the second fastest tool.

Let us now provide an overall evaluation of our tools. Our tools are comparable with \texttt{Real2Float} (resp. \texttt{Rosa}) in terms of accuracy and faster than them. In comparison with \texttt{FPTaylor}, our tools are in general less precise but still very competitive in accuracy, and they outperform \texttt{FPTaylor} in computation time. A salient advantage of our tools, in particular \texttt{FPKriSten}, over \texttt{FPTaylor} is a good trade-off between computation time and accuracy for large polynomials. As we can see from the experimental results, for \texttt{ex-10-2-2}, \texttt{FPKriSten} took only $6.11$s while \texttt{FPTaylor} took $34.6$s for comparable precisions. Note that the experimentations were done with \texttt{FPBern(b)} and \texttt{FPKriSten} implemented in \texttt{Matlab};  their  \texttt{C++} implementations would allow a significant speed-up.

The good time performances of our tools come from the exploitation of sparsity. Indeed, a direct Bernstein expansion of the polynomial $l$ associated to \texttt{kepler2} leads to compute  $3^6 \times 2^{42}$ coefficients against $42 \times 3^6$ with \texttt{FPBern}. Similarly, dense Krivine-Stengle representations yield an LP with $\binom{100}{4} + 1 = 3 \ 921 \ 226$ variables while LP~\eqref{sparsetheq} involves $42 \binom{18}{4} + 1 = 128 \ 521$ variables.

\section{Conclusion and Future Works}
\label{conclusion}
We propose two new methods to compute upper bounds of absolute roundoff errors occurring while
executing polynomials programs with floating point precision. The first method uses symbolic Bernstein expansions of polynomials while the second one relies on a hierarchy of LP relaxations derived from sparse Krivine-Stengle representations. The overall computational cost is drastically reduced compared to the dense problem, thanks to a specific exploitation of the sparsity pattern between input and error variables, yielding promising experimental results.
%Numerical experiments obtained with our two software \texttt{FPBern}  and \texttt{FPKriSten} show very promising results as they yield similar performance compared to existing competitive tools while often providing tighter error bounds.

Our approach is currently limited to programs implementing polynomials with box constrained variables. First, a direction of further research investigation is an extension to handle more complicated input sets. Extending to semialgebraic sets is theoretically possible with the hierarchy of LP relaxations based on sparse Krivine-Stengle representations but requires careful implementation in order not to compromise efficiency. For our method based on Bernstein expansions, it would be worth adapting the techniques described in~\cite{Mantzaflaris2010} to obtain polygonal approximations of semialgebraic sets. Second, we intend to aim at formal verification of bounds by interfacing either~\texttt{FPBern} with the PVS libraries~\cite{munoz13} related to Bernstein expansions, or \texttt{FPKirSten} with the $\texttt{Coq}$ libraries available in $\texttt{Real2Float}$~\cite{real2float}. Finally, a delicate but important open problem is to apply such optimization techniques in order to handle roundoff errors of programs implementing finite or infinite loops as well as conditional statements.

%\textbf{\large Acknowledgement}\\
%\newline
%\normalsize
%We gratefully acknowledge the support of Agence Nationale de la 
%Recherche (ANR) through the CADMIDIA project (grant ANR-13-CESA-0008-03).
%\vspace{-0.4cm}
\newpage
\bibliographystyle{acm}
%\bibliography{bibliography}

\newpage
\appendix
\section{Polynomial Program Benchmarks}
\label{appendix}

\begin{itemize} 
	\item rigibody1 : $(x_1,x_2,x_3) \mapsto -x_1x_2-2x_2x_3-x_1-x_3$ defined on $[-15,15]^3$.
	\item rigibody2 : $(x_1,x_2,x_3) \mapsto 2x_1x_2x_3+6x_3^2-x_2^2x_1x_3-x_2$ defined on $[-15,15]^3$.
	\item kepler0 : $(x_1,x_2,x_3,x_4,x_5,x_6) \mapsto x_2x_5+x_3x_6-x_2x_3-x_5x_6+x_1(-x_1+x_2+x_3-x_4+x_5+x_6)$ defined on $[4,6.36]^6$.
	\item kepler1 : $(x_1,x_2,x_3,x_4) \mapsto x_1x_4(-x_1+x_2+x_3-x_4)+x_2(x_1-x_2+x_3+x_4)+x_3(x_1+x_2-x_3+x_4)-x_2x_3x_4-x_1x_3-x_1x_2-x_4$ defined on $[4,6.36]^4$.
	\item kepler2 : $(x_1,x_2,x_3,x_4,x_5,x_6) \mapsto x_1x_4(-x_1+x_2+x_3-x_4+x_5+x_6)+x_2x_5(x_1-x_2+x_3+x_4-x_5+x_6)+x_3x_6(x_1+x_2-x_3+x_4+x_5-x_6)-x_2x_3x_4-x_1x_3x_5-x_1x_2x_6-x_4x_5x_6$ defined on $[4,6.36]^6$.
	\item sineTaylor : $x \mapsto x-\frac{x^3}{6.0}+\frac{x^5}{120.0}-\frac{x^7}{5040.0}$ defined on $[-1.57079632679,1.57079632679]$.
	\item sineOrder3 : $x \mapsto 0.954929658551372 x-0.12900613773279798 x^3$ defined on $[-2,2]$.
	\item sqroot : $x \mapsto 1.0+0.5x-0.125x^2+0.0625x^3-0.0390625x^4$ defined on $[0,1]$.
	\item himmilbeau : $(x_1,x_2) \mapsto (x_1^2+x_2-11)^2+(x_1+x_2^2-7)^2$ defined on $[-5,5]^2$.
	\item schwefel : $(x_1,x_2,x_3) \mapsto (x_1-x_2)^2+(x_2-1)^2+(x_1-x_3^2)^2+(x_3-1)^2$ defined on $[-10,10]^3$.
	\item magnetism : $(x_1,x_2,x_3,x_4,x_5,x_6,x_7) \mapsto x_1^2+2 x_2^2+2 x_3^2+2 x_4^2+2 x_5^2+2 x_6^2+2 x_7^2-x_1$ defined on $[-1,1]^7$.
	\item caprasse : $(x_1,x_2,x_3,x_4) \mapsto x_1 x_3^3+4 x_2 x_3^2 x_4+4 x_1 x_3 x_4^2+2 x_2 x_4^3+4 x_1 x_3+4 x_3^2-10 x_2 x_4-10 x_4^2+2$ defined on $[-0.5,0.5]^4$.
	
\end{itemize} 

\end{document}